\newcommand{\pr}{\rightarrow}
\newcommand{\ba}{\begin{array}}
\newcommand{\ea}{\end{array}}
\newcommand{\vart}{\vartheta}
\newcommand{\eps}{\varepsilon}
\newcommand{\il}{\int\limits}
\newcommand{\bi}[1]{\begin{inspring}{#1}}
\newcommand{\ei}{\end{inspring}}
\newcommand{\beq}{\begin{equation}}
\newcommand{\eq}{\end{equation}}
\newcommand{\dC}{{\Bbb C}}
\newcommand{\eqan}[1]{\begin{align} #1 \end{align}}
\newtheorem{theorem}{Theorem}
\newtheorem{lemma}[theorem]{Lemma}
\newtheorem{proposition}[theorem]{Proposition}
\begin{document}
\title{Novel heavy-traffic regimes for large-scale service systems}
\author{
A.J.E.M. Janssen \and
J.S.H. van Leeuwaarden \and
B.W.J. Mathijsen
\footnote{Eindhoven University of Technology, Department of Mathematics and Computer Science, P.O. Box 513, 5600 MB Eindhoven, The Netherlands.
\{a.j.e.m.janssen,j.s.h.v.leeuwaarden,b.w.j.mathijsen\}@tue.nl.}
}
        
\maketitle

\footnotetext[0]{This work was financially supported by The Netherlands Organization for Scientific Research (NWO) and by an ERC Starting Grant.}

\begin{abstract}
We introduce a family of heavy-traffic regimes for large scale service systems, presenting a range of scalings that include both moderate and extreme heavy traffic, as compared to classical heavy traffic. The heavy-traffic regimes can be translated into capacity sizing rules that lead to Economies-of-Scales, so that the system utilization approaches 100\% while congestion remains limited. 
We obtain heavy-traffic approximations for stationary performance measures in terms of
asymptotic expansions, using a non-standard saddle point method, tailored to the specific form of integral expressions for the performance measures, in combination with the heavy-traffic regimes. 
\end{abstract}

{\bf Keywords}:
heavy-traffic approximations, heavy-traffic regimes, large service systems, queueing theory, asymptotic analysis, saddle point method, Riemann zeta function

{\bf AMS 2010 Subject Classification}: 
60K25, 60G50, 30E20, 41A60

\section{Introduction}
Facing costs and large populations in need of service, service operations in for instance call centers, health care and digital communications face the challenge of matching customer demand with provider capacity. Timely access and responsiveness should be balanced with the costs of service capacity, and the central question is thus how to match capacity and demand. By taking into account the natural fluctuations of demand, stochastic models have proved instrumental in both quantifying and improving the operational performance of service systems. A celebrated capacity sizing rule for large service systems, which arises from stochastic intuition, prescribes that capacity should be such that it can deal with the natural fluctuations of demand. Say the demand per period is given by some random variable $A$, with mean $\mu_A$ and variance $\sigma^2_A$. For systems facing large demand one then sets the capacity according to the rule $s=\mu_A+\beta \sigma_A$, which consists of a part $\mu_A$ that is minimally required to deal with the incoming demand, and a part $\beta \sigma_A$ which is the additional capacity that should protect the system against stochastically predictable yet unforeseeable fluctuations. The additional capacity $\beta \sigma_A$ hence presents a {\it hedge against variability}, which is of the order of the natural fluctuations in demand $\sigma_A$, and which can be fine-tuned by setting the constant $\beta$.

Large-scale service operations are often amenable to pursue the dual goal of high system utilization and short delays. In order to achieve this, capacity sizing rules, or heavy-traffic regimes, typically scale systems such that the system utilization approaches 100\% while both the demand and capacity grow large, hence rendering effects of Economies-of-Scale. The capacity sizing rule described above often fulfills this condition, and we can best describe this in terms of a setting in which the demand per period is generated by many customers. Consider a service system with $n$ independent customers and let $X$ denote the generic random variable that describes the individual customer demand per period, with mean $\mu_X$ and variance $\sigma_X^2$. Denote the service capacity by $s_n$, so that the system utilization is given by $\rho_n=n\mu_X/s_n$, where we index with $n$ to express the dependence on the scale at which the system operates. The traditional capacity sizing rule would then be $s_n=n\mu_X+\beta \sigma_X \sqrt{n} $. The standard heavy-traffic paradigm, which builds on the Central Limit Theorem, then prescribes to consider a sequence of systems indexed by $n$ with associated loads $\rho_n$ such that
\beq\label{bb1}
\sqrt{n}(1-\rho_n)\rightarrow \gamma=\frac{\beta\sigma_X^2}{\mu_X}>0, \quad {\rm as} \ n\to\infty.
\eq
Starting from this classical setting, we introduce a novel family of heavy traffic regimes by considering a wide range of capacity sizing rules for systems serving many customers and that operate close to 100\% system utilization. This novel family is best described in terms of a parameter $\alpha$ for which we assume that
\beq\label{bb}
n^{\alpha}(1-\rho_n)\rightarrow \gamma, \quad {\rm as} \ n\to\infty,  \ \gamma> 0.
\eq
The parameter $\alpha\geq 0$ defines a whole range of possible scaling regimes, including the classic case $\alpha=1/2$. 
In terms of a capacity sizing rule for systems with many customers, the condition \eqref{bb} is tantamount to $s_n=n\mu_X+\beta \sigma_X n^{1-\alpha}$. Similar capacity sizing rules have been considered in \cite{Bassamboo2010,maman} for many-server systems with uncertain arrival rates. Hence, for $\alpha\in(0,1/2)$ the variability hedge is relatively large, so that the regime parameterized by $\alpha\in(0,1/2)$ can  be seen as {\it moderate} heavy traffic: heavy traffic conditions in which the full occupancy is reached more slowly, as a function of $n$, than for classical heavy traffic. For opposite reasons the range $\alpha\in(1/2,\infty)$ corresponds to {\it extreme} heavy traffic due to a relatively small variability hedge. Note that the case $\alpha=0$ does not lead to 100\% system utilization when $n\to\infty$.

In this paper we apply \eqref{bb} to a specific stochastic model, and show that Economies-of-Scale can be achieved for a large range of $\alpha$, although the nature of the benefits obtained by operating on large scale depends on the precise capacity sizing rule (hence the parameter $\alpha$). We quantify performance in terms of stationary  measures: The mean and variance of the congestion in the system, and the probability of an empty system. For these performance measures we derive heavy-traffic limits under the scalings \eqref{bb} that
 are relatively simple functions of only the first two moments of the demand per period. Such parsimonious expressions are useful for quantifying and improving system behavior. The heavy-traffic limits, however, provide also qualitative insight into the system behavior. Our asymptotic analysis shows that mean congestion is $O(n^\alpha)$, which implies
that delays experienced by the customers are negligible for all values of $\alpha\in [0,1)$, are roughly constant for $\alpha=1$, and grow without bound for $\alpha>1$.  We expect this qualitative behavior to be universal for a wide range of stochastic models to which the regime \eqref{bb} is applied.
We further show the existence of the following trichotomy as $n\to \infty$ under \eqref{bb}: For $\alpha \in (0,1/2)$ the empty-system probability converges to $1$, for $\alpha\in (1/2,1)$ it converges to $0$, while only for $\alpha=1/2$ there is a limiting value in $(0,1)$. Hence, as expected, the system performance deteriorates with $\alpha$, in a rather crude way for the empty-system probability, and in only a mild way for mean congestion levels. The regime \eqref{bb} thus presents a range of possible capacity sizing rules that all lead to Economies-of-Scale, and depending on what is the desired nature of performance for a particular service system, an appropriate $\alpha$ can be selected. From the quantitative perspective, our detailed asymptotic analysis leads to more precise asymptotic estimates for the performance measures in heavy traffic, which reveal the exact manner in which the mean congestion is influenced by $\alpha$ and $\gamma$.

To explore the family of heavy-traffic regimes \eqref{bb}, we choose as a vehicle a specific discrete stochastic model, in which we divide time into periods of equal length, and model the net input in period $k$ as the difference between the incoming demand $A_n(k)$ and a fixed capacity $s_n$. Assuming demand is generated by $n$ independent customers, the demand per period can be written as $A_n(k) = \sum_{i=1}^n X_{i}(k)$, and is assumed to be integer-valued. The rule in \eqref{bb} thus specifies how the mean and variance of $A_n(k)$ (and hence of $X_i(k)$), and simultaneously $s_n$, will all grow to infinity as functions of $n$. Although the scaling \eqref{bb} can be applied to other possibly more general models, our model strikes a good balance between applicability and mathematical tractability. The vast majority of the literature for service systems builds on stochastic models for individual customer arrivals and departures under Markovian assumptions. In particular, the classical birth-death processes that describe $M/M/s_n$ and $M/M/s_n+M$ systems are the main drivers, both for plain performance evaluation, and for exploring capacity sizing rules in heavy-traffic regimes, see \cite{halfinwhitt,Borst2004}. These birth-death processes are driven by {\it arrival rates}, leading to sample paths in which customers leave and depart one by one, while our model represents processes embedded at equidistant time points, driven by {\it arrival counts} in the periods between these time points. Albeit at a rougher scale, or at a higher level of aggregation, a practical modeling advantage is that the random variable $A_n(k)$ leaves room for interpretations that do not rely on the Poisson assumption. Let us mention some possible interpretations. The canonical framework for large data-handling systems considers a buffer that receives messages from $n$ independent and identical information sources. Source $i$ generates $X_{i}(k)$ data packets in slot $k$, so that in total $A_n(k) = \sum_{i=1}^n X_{i}(k)$ packets join the buffer in slot $k$. The buffer depletes through an output channel with a maximum transmission capacity of $s_n$ packets per time slot. As such our model can be viewed as a discrete version of the Anick-Mitra-Sondhi model \cite{Anick1982}. Many-sources scaling became popular as it is well suited to modern telecommunications networks, in which a switch may have hundreds of different input flows, but the situation in which $A_n(k) = \sum_{i=1}^n X_{i}(k)$ can in principle be applied to any service system in which demand can be regarded as coming from many different inputs. 

Our stochastic model can be viewed as a discrete time bulk service queue, in which work is served in bulks of size $s_n$. This model is one of the canonical models in queueing theory, having a wide range of applications in fields like digital communication, wireless networks, road traffic, reservation systems, healthcare and many more (see \cite{Bruneel1993} and \cite[Chap.~5]{johanthesis} for an overview). In road traffic, the basic model for congestion at an intersection, known as the fixed-cycle traffic-light queue \cite{Newell1960,vanLeeuwaarden2006}, is related to our discrete bulk service queue. Then $s_n$ represents the maximum number of delayed cars in front of a traffic light that can depart during one green period, while $A_k(n)$ is the number of newly arriving cars during a consecutive green and red period. An example from healthcare is panel sizing  \cite{Zacharias2014}. Say a general practitioner has a pool of $n$ clients (typically in the order of thousands \cite{Green2008}), all of which are potential patients, and together require $A_k(n)$ consults per day. Further assume that the practitioner can see a maximum number of $s_n$ patients per day. What is then an appropriate patient panel size $n$, which strikes a reasonable balance between accessing medical care in a timely manner and restricting the time that the practitioner sits idle? The panel size application is one of many examples of an appointment book, referring to some schedule of appointments for a fixed period, with capacity $s_n$ appointments per period and newly arriving appointments $A_k(n)$ per period. See \cite{Dai2014} for another recent example of an appointment book in a healthcare setting, again in terms of our bulk service queue, with $A_n(k)$ the new patients per day and $s_n$ the number of available beds. For all examples above, and many more, our new class of heavy-traffic scalings \eqref{bb} presents capacity sizing rules for which the expected performance can be quantified using the results in this paper. This will be helpful in dimensioning the systems (How much capacity is needed to achieve a certain target performance?) while exploiting Economies-of-Scale. For appointment books our model together with the capacity sizing rules \eqref{bb} are particularly relevant for {\it advanced access} \cite{Green2008}, a scheduling approach in healthcare designed to reduce delays by offering every patient a same-day appointment, regardless of the urgency of the problem. In that way, patients do not have to wait long for appointments, and practices do not waste capacity by holding appointments in anticipation of urgent.

Next to the freedom to model different situations, another advantage of our model is that it is mathematically tractable, in the sense that it can be subjected to powerful mathematical methods from complex and asymptotic analysis. In order to establish the heavy-traffic limits we start from Pollaczek's formula for the transform of the stationary queue length distribution in terms of a contour integral. From this famous transform representation, contour integrals for the empty-system probability and the mean and variance of the congestion immediately follow. Contour integrals are often amenable to asymptotic evaluation (see e.g.~\cite{cohen}), particular for obtaining classical heavy- traffic asymptotics. We also subject the contour integral representations to asymptotic evaluation, but not under classical heavy traffic scaling. This asymptotic analysis requires a {\it non-standard} saddle point method (see \cite{flajolet} for an historical account on the application of the saddle point method in mathematics), tailored to the specific form of the integral expressions that arise under the capacity sizing rule \eqref{bb}. The saddle point method in its standard form is typically suited for large deviations regimes, for instance to characterize rare event probabilities, and cannot be applied to asymptotically characterize other stationary measures like the mean or mass at zero. Indeed, for our model the saddle point converges to one (as $n\to\infty$), which is a singular point of the integrand, and renders the standard saddle point method useless. Our non-standard saddle point method, originally proposed by \cite{debruijn}, is made specifically to overcome this complication. This leads to asymptotic expansions for the  performance measures, of which the limiting forms correspond to the heavy-traffic limits, and pre-limit forms present refined approximations for pre-limit systems ($n<\infty$) in heavy traffic. Such refinements to heavy-traffic limits are commonly referred to as {\em corrected diffusion approximations} \cite{siegmund,blanchetglynn,asmussen}.

\noindent{\bf Further connections to the literature.}
In this paper we consider a bulk service queue, which serves as one of the key models for the performance analysis of service systems. Under the family of scalings \eqref{bb} we establish heavy-traffic approximations that give insight into the system behavior in high occupancy scenarios. We like to point out that the results in this paper are all formulated for the special case of demand generated by many sources, so that the demand $A_n(k)$ can be described in terms of the sum $\sum_{i=1}^n X_{i}(k)$, but the methodology we develop is applicable to more general models that make no specific assumptions on the random variable $A_n(k)$ except for its second moment to exists. What is important is that Pollaczek's formula is available, so that the saddle point method can be applied. We now discuss two classes of stochastic systems for which the heavy-traffic regime \eqref{bb1} has been studied extensively, and for which our new family of regimes \eqref{bb} is largely unexplored. We discuss these classes because, despite the Pollaczek formula not to hold, we believe the qualitative results that we reveal for our particular model should to a large extent carry over to these settings as well, presenting some interesting avenues for further research (see \S\ref{subsec62}).

The first class concerns so-called  {\it nearly-deterministic} systems \cite{nd1,nd2}, denoted by $G_n/G_n/1$ system, where $G_n$ stands for {\it cyclic thinning} of order $n$, indicating that some  point process is thinned to contain only every $n$th point. As $n\to\infty$, the $G_n/G_n/1$ systems approach the deterministic $D/D/1$ system. For $G_n/G_n/1$ systems, \cite{nd1} establishes stochastic-process limits, and \cite{nd2} derives heavy-traffic limits for stationary waiting times. In the framework of \cite{nd1,nd2}, our stochastic model corresponds to a $D/G_n/1$ queue, where the sequence of service times $(A_n(k))_{k\geq 1}$ follows from a cyclically thinned sequence of i.i.d.~random variables $(X_i(k))$. It follows from \cite[Theorem 3]{nd2} that the rescaled stationary waiting time process converges under \eqref{bb1} to a reflected Gaussian random walk. Hence, the performance measures of the nearly deterministic system, under \eqref{lind} and  \eqref{bb1}, should be well approximated by the performance measures of the reflected Gaussian random walk, giving rise to heavy-traffic approximations. This connection is discussed in detail in \S\ref{subsec3.2}. It seems likely that results similar as in this paper can be obtained for applying the scaling \eqref{bb} to the nearly-deterministic systems in \cite{nd1,nd2}, and because Pollaczek's formula also applies to this setting, the non-standard saddle point method developed in this paper can provide the appropriate methodology. 

The second class concerns multi-server systems, and in particular the many-server regime (not to be confused with many-sources regime). When we interpret $s_n$ as the number of servers, instead of capacity per time slot or order of thinning, the scaling \eqref{bb1} is similar to the many-server heavy-traffic regime called QED (Quality-and-Efficiency Driven) or Halfin-Whitt regime, first developed by Halfin and Whitt \cite{halfinwhitt} for the $M/M/s_n$ system. The QED regime \eqref{bb1} is in many situations a highly effective way of scaling, because the probability of delay converges to a non-degenerate limit away from both zero and one, and the mean delay is asymptotically negligible as the number if servers grows large. The QED regime \eqref{bb1} is naturally positioned in between the Quality-Driven (QD) regime and the Efficiency-Driven (ED) regime. In the QD regime, the load remains bounded away from 1, which corresponds to setting $\alpha=0$ in \eqref{bb}. Hence, the range $\alpha\in(0,1/2)$ bridges the gap between the QED regime and the QD regime. Likewise, the ED regime corresponds to setting $\alpha=1$ in \eqref{bb}, so that the range $\alpha\in(1/2,1]$ connects the QED regime and ED regime. For the birth-death process describing the $M/M/s_n$ system, Maman \cite{maman} introduced a scaling similar to \eqref{bb}, and called it the QED-$c$ regime, also bridging the ED and QD regimes. \cite[Thm 4.1]{maman} says that the expected waiting time under the scaling $s_n = n\mu_X+\beta\sigma_Xn^{1-\alpha}$ is of order $s_n^{1-\alpha}$, which is equivalent to the expected queue length being of order $n^\alpha$ by Little's law. We should stress though that we expect the mathematical techniques that are needed to establish heavy-traffic results could be entirely different than in this paper, because Pollaczek's formula does not apply to many-server settings. The specific model assumptions will determine to a large extent the appropriate methodology. Under Markovian assumptions leading to the $M/M/s_n$ system, product-form solutions are available for the stationary distribution. This makes it possible to describe performance measures like the mean congestion directly in terms of real integrals. Where the saddle point method is used for integrals in the complex plane, the Laplace method (see e.g.~\cite{flajolet}) is used for real integrals. Hence, for the asymptotic evaluation of the $M/M/s_n$ system under the scaling \eqref{bb}, the Laplace method seems an appropriate methodology, although again one needs to deal with possible singularities in the integrand. For $G/D/s_n$ systems, which assume deterministic service times, it has been shown in \cite{jelenkovic} that using a decomposition property the dynamics of this multi-server systems can be captured in terms of a single-server system. Hence, for these systems, Pollaczek's formula applies, and our saddle point method can most likely be applied to obtain heavy-traffic results in the regimes \eqref{bb}. Under more general conditions, for instance leading to a $G/G/s_n$ system, it is simply unclear at this stage how to obtain precise heavy-traffic approximations for \eqref{bb}, because a tractable description of the performance measures is not available.

\noindent{\bf Structure of the paper.}
In \S\ref{sec1} we present in detail the model and the family of heavy-traffic scalings. In \S\ref{spSec} we introduce the saddle point method. In \S\ref{sec3} we apply the saddle point method for the mean congestion level. Theorem \ref{mainthm} gives for all heavy-traffic scalings the limiting behavior in terms of an integral expression. As a consequence, we show in
 Proposition \ref{prop1} that there are two types of heavy-traffic behavior, depending on whether $\alpha\in(0,1/2)$ or $\alpha\geq 1/2$.
In \S\ref{subsec3.2} we discuss for the case $\alpha=1/2$ the connection with the Gaussian random walk and the Riemann zeta function.
In fact, we show that for all $\alpha\geq 1/2$ there exists a connection between the integral expression in Theorem \ref{mainthm} and the Riemann zeta function.
In \S\ref{more} we apply the saddle point method to obtain several more heavy-traffic results, including refined heavy-traffic approximations for the mean congestion level, and the leading heavy-traffic behaviors for the variance of the stationary congestion level and for the empty-system probability. Numerical examples are given in \S\ref{numm}. Appendix \ref{app} presents a new self-contained derivation of Pollaczek's formula for the transform of the stationary waiting time in the $D/G/1$ system, which forms the point of departure for our analysis. In \S\ref{numm}, however, we do confirm through numerical experiments that under \eqref{bb}, various multi-server systems behave similar to our discrete bulk service queue.

\section{Model description and heavy-traffic regimes}\label{sec1}
We thus consider a discrete stochastic model in which time is divided into periods of equal length. At the beginning of each period $k=1,2,3,...$ new demand $A_n(k)$ arrives to the system. The demands per period $A_n(1),A_n(2),...$ are assumed independent and equal in distribution to some non-negative integer-valued random variable $A_n$.
The system has a service capacity $s_n\in\mathbb{N}$ per period, so that the recursion
\beq
\label{lind}
Q_{k+1} = \max\{Q_k + A_n(k) - s_n,0\},\qquad k=1,2,...,
\eq
assuming $Q_0=0$, gives rise to a Markov chain $(Q_k)_{k\geq 1}$ that describes the congestion in the system over time. The probability generation function (pgf)
\beq \label{e2}
A_n(z)=\sum_{j=0}^{\infty} \mathbb{P}(A_n=j) z^j
\eq
is assumed analytic in a disk $|z|<r$ with $r>1$, which implies that all moments of $A_n$ exist. We also assume that
\beq \label{e3}
A_n'(1)=\mathbb{E}A_n(k)=\mu_A<s_n.
\eq

Under the assumption (\ref{e3}) the function $z^{s_n}-A(z)$ has exactly $s$ zeros in the closed unit disk, one of these being $z=1$ (see \cite{rouche}).
We further assume that $\mathbb{P}(A=j)>0$ for some $j>s_n$.
Under this assumption the function
$z^{s_n}-A(z)$ also has zeros outside $|z|\leq1$, and we let $r_0$ be the minimum modulus of these zeros.
The number $r_0$ is the unique zero of $z^{s_n}-A(z)$ with real $z>1$; see e.g.~\cite{ref11}.


Under the assumption (\ref{e3}) the stationary distribution $\lim_{k\to\infty}\mathbb{P}(Q_k=j)=\mathbb{P}(Q=j)$, $j=0,1,\ldots$ exists, with the random variable $Q$ defined as having this stationary distribution.

 We let
\beq \label{e4}
Q(w)=\sum_{j=0}^{\infty}\mathbb{P}(Q=j)w^j
\eq
be the pgf of the stationary distribution. $Q(w)$ is analytic in $|w|<r_0$, and given by Pollaczek's formula (see e.g.~\cite{awp, cohen})
\beq \label{e5}
Q(w)=\exp\,\Bigl[\,\frac{1}{2\pi i}\,\il_{|z|=1+\eps}\,\ln\Bigl(\frac{w-z}{1-z}\Bigr)\,\frac{(z^{s_n}-A(z))'}{z^{s_n}-A(z)}\,dz\Bigr] ,
\eq
where $\eps>0$ is such that $|w|<1+\eps<r_0$. In (\ref{e5}), the principal value of $\ln(\frac{w-z}{1-z})$ is chosen, which is analytic in the whole complex $z$-plane, except for a branch cut consisting of the straight line segment from $w$ to 1. In  Appendix \ref{app} we present a short proof of Pollaczek's formula in the discrete-queue setting that we have here.

Using $\mathbb{P}(Q=0)=Q(0)$, $\mu_Q=Q'(1)$ and $\sigma_Q^2  =  Q''(1)+Q'(1)-(Q'(1))^2$,   it follows by straightforward manipulations that
\begin{align} \label{e6}
\mathbb{P}(Q=0)&=\exp\,\Bigl[\frac{1}{2\pi i}\,\il_{|z|=1+\eps}\,\ln\Bigl(\frac{z}{z-1}\Bigr)\frac{(z^{s_n}-A(z))'}{z^{s_n}-A(z)}\,dz\Bigr] , \\
\label{e7}
\mu_Q&=\frac{1}{2\pi i}\,\il_{|z|=1+\eps}\,\frac{1}{1-z}~\frac{(z^{s_n}-A(z))'}{z^{s_n}-A(z)}\,dz ,\\
\label{e8}
\sigma_Q^2 &= \frac{1}{2\pi i}\,\il_{|z|=1+\eps}\,\frac{-z}{(1-z)^2}~\frac{(z^{s_n}-A(z))'}{z^{s_n}-A(z)}\,dz .
\end{align}
Because $s_n$ appears directly in expressions \eqref{e6}-\eqref{e8}, we will be conducting our analysis with respect to $s_n$ rather than $n$. Note that this has no consequences for our results on the convergence speed of the performance metrics, since $s_n = O(n)$. Furthermore, we will omit the index $n$ when describing the capacity $s_n$ in the remainder of the paper for brevity.  

We next discuss in more detail the family of heavy-traffic scalings considered in this paper, which combines two features. First, we have assumed that
 $A_k^{n}$ is in distribution equal to the sum of work generated by all sources, $X_{1,k}+...+X_{n,k}$, where the $X_{i,k}$ are for all $i$ and $k$ i.i.d.~copies of a random variable $X$, of which the pgf $X(z)=\sum_{j=0}^{\infty}\mathbb{P}(X=j)z^j$ has radius of convergence $r>1$, and
\beq \label{e9}
0<\mu_A=n\mu_X=n X'(1)<s .
\eq
Hence
\beq \label{e10}
\vart:=\frac{n}{s}\in(0,1/\mu_X) .
\eq
Second, we scale the system according to \eqref{bb}, for which we assume that
\beq \label{e11}
\rho_s=\vart\,\mu_X=1-\frac{\gamma}{s^\alpha}
\eq
in which $\gamma>0$ is bounded away from 0 and $\infty$ as $s\pr\infty$.

The condition that $\mathbb{P}(A=j)>0$ for some $j>s$ holds when the degree $d$ of $X(z)$ (with $d=\infty$ if $X(z)$ is not a polynomial) is such that $nd>s$.

To avoid certain complications when applying the saddle point method, we further assume that
\beq \label{e12}
|X(z)|<X(r_1) ,~~~~~~|z|=r_1\,,~~z\neq r_1 ,
\eq
for any $r_1\in(0,r)$. This implies that $r_0$ is the unique zero of $z^s-A(z)$ on $|z|=r_0$.
This condition is related to Cram\'er's condition, see \cite[pp.~189 and 355]{asmussen}, and it has also been used in \cite{relaxation}.
Condition \eqref{e12} holds when the set of all $j=0,1,\ldots$ such that $\mathbb{P}(X=j)>0$ is not contained in an  arithmetic progression with a ratio larger than one (see also \cite{rouche}).

\section{Non-standard saddle point method}\label{spSec}\ \
We illustrate our saddle point method for $\mu_Q$.
As a first step, we bring  (\ref{e7}) in a form which is amenable to saddle point analysis.
\begin{lemma}
\beq \label{e18}
\mu_Q  =  \frac{s}{2\pi i}\,\il_{|z|=1+\eps}\,\frac{g'(z)}{z-1}~\frac{\exp(s\,g(z))}{1-\exp(s\,g(z))}\,dz
\eq
with
\beq \label{e15}
g(z)={-}{\rm ln}\,z+\vart\,{\rm ln}(X(z)) .
\eq
\end{lemma}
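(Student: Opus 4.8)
The plan is to start from the contour-integral representation \eqref{e7} and massage the integrand so that the factor $z^{s}-A(z)$ is expressed in the exponential form $z^{s}(1-\exp(s\,g(z)))$, which will make $g$ appear naturally as the phase function for the saddle point method. First I would write $z^{s}-A(z)=z^{s}\bigl(1-A(z)/z^{s}\bigr)$ and observe that, on the contour $|z|=1+\eps$ (where $\eps$ is small and $A(z)/z^{s}$ is controlled by the assumptions \eqref{e3}, \eqref{e9}--\eqref{e11}, \eqref{e12}), we have $A(z)/z^{s}=X(z)^{n}/z^{s}$. Using $n=\vart s$ from \eqref{e10}, this equals $\bigl(X(z)^{\vart}/z\bigr)^{s}=\exp\bigl(s(\vart\ln X(z)-\ln z)\bigr)=\exp(s\,g(z))$ with $g$ as in \eqref{e15}. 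One has to be a little careful that $\ln X(z)$ and the fractional power $X(z)^{\vart}$ are well defined and analytic in a neighbourhood of the contour: since $X(1)=1$, $X$ is analytic with radius of convergence $r>1$, and \eqref{e12} prevents $X$ from vanishing on $|z|=r_1$ for the relevant radii, the principal branch of $\ln X(z)$ is analytic there, so $g$ is analytic in an annulus containing $|z|=1+\eps$, and the identity $z^{s}-A(z)=z^{s}(1-\exp(s\,g(z)))$ holds as an identity between analytic functions on that annulus.

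Next I would differentiate. From $z^{s}-A(z)=z^{s}\bigl(1-\exp(s\,g(z))\bigr)$, logarithmic differentiation gives
\[
\frac{(z^{s}-A(z))'}{z^{s}-A(z)}=\frac{s}{z}+\frac{-s\,g'(z)\exp(s\,g(z))}{1-\exp(s\,g(z))}.
\]
Substituting this into \eqref{e7}, the contour integral splits into two pieces. The first piece is
\[
\frac{1}{2\pi i}\,\il_{|z|=1+\eps}\,\frac{1}{1-z}\,\frac{s}{z}\,dz,
\]
which I would show vanishes: the integrand $\tfrac{s}{z(1-z)}$ is meromorphic with poles only at $z=0$ and $z=1$, both inside $|z|=1+\eps$, and the sum of the two residues is $s\cdot 1+s\cdot(-1)=0$ (equivalently, the integrand is $O(|z|^{-2})$ at infinity, so the integral over the large circle is zero and we may collapse the contour). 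The second piece is exactly
\[
\frac{1}{2\pi i}\,\il_{|z|=1+\eps}\,\frac{1}{1-z}\cdot\frac{-s\,g'(z)\exp(s\,g(z))}{1-\exp(s\,g(z))}\,dz
=\frac{s}{2\pi i}\,\il_{|z|=1+\eps}\,\frac{g'(z)}{z-1}\,\frac{\exp(s\,g(z))}{1-\exp(s\,g(z))}\,dz,
\]
which is \eqref{e18}.

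The main obstacle I anticipate is not the algebra but the analytic bookkeeping around the branch structure and the validity of collapsing/deforming contours: one must check that $1-\exp(s\,g(z))$ does not vanish on the contour $|z|=1+\eps$ (so that no spurious poles are introduced) — this is where the zero-counting facts about $z^{s}-A(z)$ recalled after \eqref{e3}, together with the choice $1<1+\eps<r_0$ and condition \eqref{e12}, are used — and that the splitting of the integrand into the two pieces above is legitimate, i.e. each piece is separately integrable on the contour. Once those points are secured, the computation of the residues of $s/(z(1-z))$ and the recombination of terms is routine, and \eqref{e18} follows.
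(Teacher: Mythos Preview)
Your proof is correct and follows essentially the same route as the paper's: both decompose the logarithmic derivative of $z^{s}-A(z)$ into the term $s/z$ (whose contribution vanishes by the residue calculation you give, matching the paper's \eqref{e16}) and a term that, via $z^{-s}X^{n}(z)=\exp(s\,g(z))$ and $g'(z)=\tfrac{1}{z}\bigl(\vart\,zX'(z)/X(z)-1\bigr)$, yields \eqref{e18}. Your use of logarithmic differentiation of $z^{s}(1-\exp(s\,g(z)))$ is a slightly slicker packaging of the paper's explicit algebra in \eqref{e13}--\eqref{e17}, and your remarks on the analyticity of $\ln X(z)$ go a bit beyond what the paper spells out, but the substance is the same.
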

\begin{proof}
With $A(z)=X^n(z)$,
\begin{align} \label{e13}
\frac{(z^s-A(z))'}{z^s-A(z)} & =  \frac{s\,z^{s-1}-n\,X'(z)\,X^{n-1}(z)}{z^s-X^n(z)} \nonumber \\
& =  \frac{s}{z}-\frac{s}{z}\,\Bigl(\frac{n}{s}~\frac{z\,X'(z)}{X(z)}-1\Bigr)\,\frac{z^{-s}\,X^n(z)}{1-z^{-s}\,X^n(z)} .
\end{align}
Write
$
z^{-s}\,X^n(z)=\exp(s g(z))$.
Noting that
\beq \label{e16}
\frac{1}{2\pi i}\,\il_{|z|=1+\eps}\,\frac{s}{z}~\frac{1}{1-z}\,dz=0 ,
\eq
and that
\beq \label{e17}
g'(z)=\frac1z\,\Bigl(\vart\,\frac{z\,X'(z)}{X(z)}-1\Bigr) ,
\eq
gives \eqref{e18}. \end{proof}

Let us now explain how the standard saddle point method can be applied to \eqref{e18}.
Since
\beq \label{e19}
g(1)=g(r_0)=0~;~~~~~~g(z)<0\,,~~1<z<r_0 ,
\eq
and by strict convexity of
\beq \label{e20}
z^{-s}\,X^n(z)=z^{-s}\,A(z)=\sum_{k=0}^{\infty}\,a_k\,z^{k-s} ,~~~~~~z\in(0,r) ,
\eq
 $g(z)$ has a unique minimum on $[1,r_0]$. This minimum is found by solving $z\in[1,r_0]$ from $g'(z)=0$, and this yields the equation
\beq \label{e21}
X(z)=\vart\,z\,X'(z) .
\eq
Denote the solution $z\in(1,r_0)$ of (\ref{e21}) by $z_{\rm sp}$, and observe that $z_{\rm sp}$ is a saddle point of $g(z)$, explaining the notation. Thus, the saddle point method can be used for the integral in (\ref{e18}) by taking $1+\eps=z_{\rm sp}$.

In the case that $\vart=n/s$ is bounded away from $1/\mu_X$ as $s\pr\infty$, we have that the minimum value of $g(z)$, $1\leq z\leq r_0$, is negative and bounded away from 0. Furthermore, $z_{\rm sp}$ is bounded away from 1, and the saddle point method can be applied in the classical way by replacing
\beq \label{e22}
\frac{\exp(s\,g(z))}{1-\exp(s\,g(z))}~~~~{\rm by}~~~~\exp(s\,g(z)) ,
\eq
at the expense of an exponentially small relative error, and performing an expansion of $g'(z)/(z_{\rm sp}-1)=d_1(z-z_{\rm sp})+O((z-z_{\rm sp})^2)$ with $d_1=g''(z_{\rm sp})/(z_{\rm sp}-1)\neq 0$.
 Using that $g(z^{\ast})=(g(z))^{\ast}$, where the $^*$ denotes complex conjugation, it can be shown that
\beq \label{e23}
\mu_Q=\frac{\exp(s\,g(z_{\rm sp}))}{(z_{\rm sp}-1)^2\,\sqrt{2\pi s\,g''(z_{\rm sp})}}\,(1+O(s^{-1})) .
\eq

We next explain why the standard saddle point method does not work for the heavy-traffic scaling considered in this paper. Since we operate in (\ref{e11}),
 $\vart\,\mu_X\pr1$ as $s\pr\infty$, and
\begin{align} \label{e24}
z_{\rm sp}-1&=\frac{\gamma}{a_2\,s^\alpha}+O(s^{-2\alpha}) ,\\
 \label{e25}
g(z_{\rm sp})&=\frac{-\gamma^2}{2a_2s^{2\alpha}}+O(s^{-3\alpha}) ,\\
 \label{e26}
g''(z_{\rm sp})&=a_2+O(s^{-\alpha}) ,
\end{align}
where
\beq \label{e27}
a_2=\frac{\sigma_X^2}{\mu_X}-\frac{\gamma}{s^\alpha}\,\Bigl(\frac{\sigma_X^2}{\mu_X}-1\Bigr) .
\eq
Hence, $\exp(sg(z))$ near $z=z_{\rm sp}$ is (as $s\pr\infty$):
 vanishingly small when $\alpha\in(0,1/2)$,
 bounded away from 1, but non-negligible when $\alpha=1/2$,
and tending to 1 when $\alpha\in(1/2,\infty)$.
Furthermore, $(z-1)^{-1}$ in \eqref{e18} is unbounded near $z=z_{\rm sp}$ as $s\pr\infty$. Therefore, an adaptation of the standard saddle point method is required, and the resulting asymptotic form of $\mu_Q$ will deviate significantly from the standard case (\ref{e23}). In particular, since $z_{\rm sp}\pr1$, this asymptotic form will contain information from $X(z)$ at $z=1$, rather than at a point away from 1 as is the case in (\ref{e23}).

An appropriate adaptation of the saddle point method is described in \cite[\S~5.12]{debruijn} for the integral
\beq \label{e28}
\il_{-\infty}^{\infty}\,\frac{e^v}{v^2+s^{-\kappa}}\,e^{-sv^2}\,dv ,
\eq
where $s\pr\infty$ and $\kappa$ is a positive parameter. We shall use and further develop this adaptation in \S~\ref{sec3} for the integral in (\ref{e18}) for which it turns out that one should take the value of the parameter $\kappa$ equal to $2 \alpha$ (see below (\ref{e43a})). However, first we transform the integral (\ref{e18}) further so as to bring it in a form in which, as in (\ref{e28}), a true Gaussian, rather than the factor $\exp(s\,g(z))$, appears.

We use a substitution $z=z(v)$ in (\ref{e18}) with real $v$ and $z(0)=z_{\rm sp}$ such that for sufficiently small $v$,
\beq \label{e29}
g(z(v))=g(z_{\rm sp})-\tfrac12\,v^2\,g''(z_{\rm sp}) .
\eq
This is feasible, since
\beq \label{e30}
g(z)=g(z_{\rm sp})+\tfrac12\,g''(z_{\rm sp})(z-z_{\rm sp})^2\Bigl(1+\frac{g'''(z_{\rm sp})}{3g''(z_{\rm sp})}\,(z-z_{\rm sp})+...\Bigr) 
\eq
with $g''(z_{\rm sp})$ positive and bounded away from 0 as $s\pr\infty$. Hence, $z(v)$ can be found for small $v$ by inverting the equation
\beq \label{e31}
(z-z_{\rm sp})\Bigl(1+\frac{g'''(z_{\rm sp})}{3g''(z_{\rm sp})}\,(z-z_{\rm sp})+...\Bigr)^{1/2}=iv .
\eq
By Lagrange's inversion theorem \cite{debruijn}, there is a $\delta>0$ (independent of $s$) such that
\beq \label{e32}
z(v)=z_{\rm sp}+iv+\sum_{k=2}^{\infty}\,c_k(iv)^k ,~~~~~~|v|<\delta ,
\eq
with real coefficients $c_k$ (since $g(z)$ is real for real $z$) and
\beq \label{e33}
c_2={-}\,\frac{g'''(z_{\rm sp})}{6g''(z_{\rm sp})} .
\eq
Thus
\beq \label{e34}
z(v)=z_{\rm sp}+iv-c_2\,v^2+O(v^3) ,~~~~~~|v|\leq\tfrac12\,\delta ,
\eq
where the order term holds uniformly in $s$. The uniformity statement follows from an inspection of the usual argument
by which Lagrange's theorem is proved, noting that the inversion in \eqref{e29} with $g$ as in \eqref{e15} is considered for $\vart\to 1/\mu_X$, $z_{\rm sp}\to 1$ with radius
of convergence $r$ away from $1$. 

By (\ref{e12}) we can restrict the integration in (\ref{e18}) to a fixed but arbitrarily small subset of $|z|=z_{\rm sp}$ near $z=z_{\rm sp}$, at the expense of an exponentially small error. Furthermore, by Cauchy's theorem and again at the expense of an exponentially small error, the integration path can be deformed in accordance with the transformation in (\ref{e29})--(\ref{e34}). Set
\beq \label{e35}
q(v)=g(z_{\rm sp})-\tfrac12\,v^2\,g''(z_{\rm sp})
\eq
and note that from \eqref{e29}
\beq \label{e36}
g'(z(v))\,z'(v)={-}v\,g''(z_{\rm sp}) .
\eq
Then substituting $z=z(v)$ in (\ref{e18}),  $\mu_Q$ is given with exponentially small error by
\begin{eqnarray} \label{e37a}
\frac{s}{2\pi i}\,\il_{-\frac12\delta}^{\frac12\delta}\,\frac{g'(z(v))}{z(v)-1}~\frac{\exp(s\,g(z(v)))}{1-\exp(s\,g(z(v)))}z'(v)\,dv,
\end{eqnarray}
which gives the following result.
\begin{lemma} \label{lemma2} The mean stationary congestion level is given with exponentially small error by
\beq \label{e37}
\mu_Q =~\frac{-s}{2\pi i}\,g''(z_{\rm sp})\,\il_{-\frac12\delta}^{\frac12\delta}\,\frac{v}{z(v)-1}~\frac{\exp(s\,q(v))}{1-\exp(s\,q(v))}\,dv .
\eq
\end{lemma}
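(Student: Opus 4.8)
The statement to prove is Lemma~\ref{lemma2}, which re-expresses the integral \eqref{e37a} for $\mu_Q$ in the cleaner form \eqref{e37}. This is essentially a bookkeeping step following the substitution $z=z(v)$ set up in \eqref{e29}--\eqref{e36}, so I expect the proof to be short.

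\medskip

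\noindent\textbf{Proof plan.} The plan is to start from \eqref{e37a}, which was already derived (with exponentially small error) in the text by restricting the contour $|z|=z_{\rm sp}$ to the arc near $z_{\rm sp}$ via \eqref{e12}, deforming it through Cauchy's theorem in accordance with the transformation $z=z(v)$ of \eqref{e29}--\eqref{e34}, and substituting. The only thing left to do is to simplify the factor $g'(z(v))\,z'(v)$ in the integrand of \eqref{e37a}. First I would invoke \eqref{e36}, which is the differentiated form of the defining relation \eqref{e29}: differentiating $g(z(v)) = g(z_{\rm sp}) - \tfrac12 v^2 g''(z_{\rm sp})$ with respect to $v$ gives $g'(z(v))\,z'(v) = -v\,g''(z_{\rm sp})$ directly. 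Substituting this identity into \eqref{e37a} replaces $g'(z(v))\,z'(v)$ by $-v\,g''(z_{\rm sp})$, and since $g''(z_{\rm sp})$ does not depend on $v$ it pulls out of the integral. Recognizing that $g(z(v)) = q(v)$ by the definitions \eqref{e29} and \eqref{e35}, the argument of the exponential becomes $s\,q(v)$, and one arrives at exactly \eqref{e37}, with the overall constant $\tfrac{-s}{2\pi i}\,g''(z_{\rm sp})$ as claimed.

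\medskip

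\noindent I would also remark on why this manipulation is legitimate. The substitution $z=z(v)$ is analytic and invertible on $|v| < \delta$ by Lagrange's inversion theorem \cite{debruijn} (as recorded in \eqref{e32}), so the change of variables in the contour integral is valid on the range $|v| \le \tfrac12\delta$ used here; the factor $z'(v)$ is the Jacobian and is accounted for through \eqref{e36}. The denominator $z(v)-1$ stays away from $0$ on this range because $z(0) = z_{\rm sp} > 1$ and $z(v) - z_{\rm sp} = iv + O(v^2)$ by \eqref{e34}, so no new singularity is introduced; similarly $1 - \exp(s\,q(v))$ is bounded away from $0$ on $0 < |v| \le \tfrac12\delta$ since $q(v) < q(0) = g(z_{\rm sp}) \le 0$ there, so the integrand in \eqref{e37} is well defined. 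The exponentially small errors are the same ones already incurred in passing from \eqref{e18} to \eqref{e37a}, namely from truncating the contour and deforming it, and are not affected by the algebraic rewriting.

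\medskip

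\noindent\textbf{Main obstacle.} There is no real obstacle here: the content of the lemma is the identity \eqref{e36}, which has already been stated, and the rest is substitution and collecting constants. The only point requiring a word of care is the uniformity in $s$ of the expansion \eqref{e34} and of the radius $\delta$ --- but this has already been argued in the text following \eqref{e34} (via inspection of the proof of Lagrange's theorem, using that $\vart \to 1/\mu_X$, $z_{\rm sp}\to 1$ while the radius of convergence $r$ stays away from $1$), so it can simply be cited. Thus the proof is essentially one line: apply \eqref{e36} to \eqref{e37a}.
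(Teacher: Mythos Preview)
Your proposal is correct and follows exactly the route taken in the paper: the passage from \eqref{e37a} to \eqref{e37} is precisely the substitution of the identity \eqref{e36} into the integrand, together with the observation $g(z(v))=q(v)$, and your additional remarks on well-definedness and uniformity merely make explicit what the paper leaves implicit.
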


In a similar fashion we get that $\mathbb{P}(Q=0)$ and $\sigma_Q^2$, see (\ref{e6}) and (\ref{e8}), are given, both with exponentially small error, by
\beq \label{e39}
\frac{-s}{2\pi i}\,g''(z_{\rm sp})\,\il_{-\frac12\delta}^{\frac12\delta}\,v\,{\rm ln}\Bigl(\frac{z(v)}{z(v)-1}\Bigr)\frac{\exp(s\,q(v))}{1-\exp(s\,q(v))}\,dv
\eq
and
\beq \label{e38}
\frac{-s}{2\pi i}\,g''(z_{\rm sp})\,\il_{-\frac12\delta}^{\frac12\delta}\,\frac{v\,z(v)}{(z(v)-1)^2}~\frac{\exp(s\,q(v))}{1-\exp(s\,q(v))}\,dv,
\eq
respectively.

\section{Heavy-traffic limits for the mean congestion level} \label{sec3}
In this section we apply the non-standard saddle point method explained in \S\ref{sec1} to the Pollaczek integral representation for the mean stationary congestion level $\mu_Q$. In \S\ref{subsec3.1} we first derive an integral representation for the leading order behavior of $\mu_Q$  with a relative error of order $O(s^{-1})$, which serves as a heavy-traffic approximation in the regime $\rho_s=1-\gamma/s^\alpha$ with $\alpha>0$. We also consider separately the cases of moderate heavy traffic ($\alpha\in(0,1/2)$) and extreme heavy traffic ($\alpha\in(1/2,\infty)$), for which the integral representation leads to vastly different alternative expressions. We find that $\mu_Q\pr 0$ more rapidly than any power of $1/s$ when $\alpha\in(0,1/2)$. When $\alpha\geq 1/2$ the saddle point method yields an integral representation with relative error $O(s^{-\min(1,\alpha)})$.
In \S\ref{subsec3.2} we specialize this general result to the CLT case $\alpha=1/2$, and make a connection with existing results.

\subsection{Leading order behavior in integral form} \label{subsec3.1}

 \begin{theorem}\label{mainthm}
The mean stationary congestion level is given by
\beq \label{e48a}
 \mu_Q=\frac{2}{\pi}\,\sigma_X\,\sqrt{\dfrac{s}{2\mu_X}}\,\il_0^{\infty}\,\frac{t^2}{d^2(s)+t^2}~\frac{\exp({-}d^2(s)-t^2)}{1-\exp({-}d^2(s)-t^2)}\,dt\,\left(1+O({s^{{-}\min(1,\alpha)}})\right)
\eq
with $
d^2(s) = s^{1-2\alpha}\gamma^2\mu_X/(2\sigma_X^2)$.
\end{theorem}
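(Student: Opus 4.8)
The plan is to start from the representation in Lemma~\ref{lemma2}, namely
\beq \label{plan:start}
\mu_Q =~\frac{-s}{2\pi i}\,g''(z_{\rm sp})\,\il_{-\frac12\delta}^{\frac12\delta}\,\frac{v}{z(v)-1}~\frac{\exp(s\,q(v))}{1-\exp(s\,q(v))}\,dv ,
\eq
and to extract the leading-order behaviour by carefully analysing the three ingredients $v/(z(v)-1)$, $\exp(s\,q(v))$ and the factor $g''(z_{\rm sp})$ using the expansions \eqref{e24}--\eqref{e27} and \eqref{e34}. First I would substitute $v = t\,(\tfrac12 s\,g''(z_{\rm sp}))^{-1/2}$ so that, by \eqref{e35}, $s\,q(v) = s\,g(z_{\rm sp}) - t^2$, turning the exponential factor into a genuine Gaussian $\exp(-d^2(s)-t^2)$ up to identifying $-s\,g(z_{\rm sp})$ with $d^2(s)$; by \eqref{e25} and \eqref{e27}, $-s\,g(z_{\rm sp}) = s^{1-2\alpha}\gamma^2/(2a_2) + O(s^{1-3\alpha}) = s^{1-2\alpha}\gamma^2\mu_X/(2\sigma_X^2)(1+O(s^{-\alpha}))$, which is $d^2(s)$ to the stated relative order. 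The new integration range becomes $|t| \le \tfrac12\delta\,(\tfrac12 s\,g''(z_{\rm sp}))^{1/2}$, which tends to $\infty$; extending it to all of $\mathbb{R}$ costs only an exponentially small error since the integrand decays like $e^{-t^2}$.

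Next I would handle the rational factor $v/(z(v)-1)$. From \eqref{e34}, $z(v)-1 = (z_{\rm sp}-1) + iv + O(v^2)$, and from \eqref{e24}, $z_{\rm sp}-1 = \gamma/(a_2 s^\alpha) + O(s^{-2\alpha})$. Writing $z_{\rm sp}-1 = c(s)$ and keeping the leading terms,
\beq \label{plan:ratio}
\frac{v}{z(v)-1} = \frac{v}{c(s)+iv}\,(1 + O(v)) ,
\eq
so the combination $\tfrac{v}{z(v)-1}\,\tfrac{\exp(sq(v))}{1-\exp(sq(v))}$ is, to leading order, an even-plus-odd decomposition in $v$; the odd part integrates to zero against the even Gaussian-type weight and the even part contributes. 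Concretely, after the $v=t\,(\cdot)^{-1/2}$ substitution one gets $v/(c(s)+iv) = t^2/(D^2(s)+t^2) + (\text{odd in }t)$ where $D^2(s) = c(s)^2\cdot\tfrac12 s\,g''(z_{\rm sp})$; and $c(s)^2 \cdot \tfrac12 s\,g''(z_{\rm sp}) = \gamma^2/(a_2^2 s^{2\alpha})\cdot \tfrac12 s\,a_2\,(1+O(s^{-\alpha})) = s^{1-2\alpha}\gamma^2/(2a_2) = d^2(s)(1+O(s^{-\alpha}))$, matching $d^2(s)$ again. The $1/(2\pi i)$ and the $i$'s from $z'(v)\,dv$ and from the odd/even split combine to produce the real prefactor; collecting $\tfrac{s}{2\pi}\,g''(z_{\rm sp})\cdot(\tfrac12 s\,g''(z_{\rm sp}))^{-1/2}\cdot 2 = \tfrac{2}{\pi}\sqrt{\tfrac{s\,g''(z_{\rm sp})}{2}}$ and using $g''(z_{\rm sp}) = a_2 + O(s^{-\alpha}) = \sigma_X^2/\mu_X\,(1+O(s^{-\alpha}))$ gives the stated prefactor $\tfrac{2}{\pi}\sigma_X\sqrt{s/(2\mu_X)}$.

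The main obstacle, and where the bulk of the careful work lies, is the error control: one must show that all the $O(v)$, $O(v^2)$, $O(v^3)$ corrections in $z(v)$ and in $\tfrac{v}{z(v)-1}$, together with the discrepancies between $g''(z_{\rm sp}),\ a_2,\ \sigma_X^2/\mu_X$ and between $-s\,g(z_{\rm sp})$ and $d^2(s)$, contribute only a relative error of order $O(s^{-\min(1,\alpha)})$ --- uniformly in $\alpha$, and in particular uniformly over the regime where $d^2(s)$ may be large (when $\alpha<1/2$), moderate ($\alpha=1/2$), or $o(1)$ ($\alpha>1/2$). For the higher-order terms in $z(v)$ this needs the uniformity of the Lagrange-inversion expansion \eqref{e34} noted in the text; for the prefactor mismatches one uses \eqref{e26}--\eqref{e27}. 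The delicate point is that when $\alpha > 1/2$, $d^2(s)\to 0$ and the integrand $\tfrac{t^2}{d^2(s)+t^2}\cdot\tfrac{e^{-d^2(s)-t^2}}{1-e^{-d^2(s)-t^2}}$ has an integrable singularity structure near $t=0$ (the $1-e^{-d^2-t^2}$ in the denominator vanishes like $d^2+t^2$), so one must verify that the relative error estimate survives integration against this near-singular weight; a dominated-convergence / explicit-bound argument on $\il_0^\infty \tfrac{t^2}{d^2+t^2}\cdot\tfrac{dt}{d^2+t^2}$-type integrals handles this. I would also double-check the claim that the odd-in-$v$ remainder integrates to exactly zero only up to the same relative error, since the perturbations $O(v)$ spoil exact parity; this is where tracking $\min(1,\alpha)$ rather than a single clean power comes from.
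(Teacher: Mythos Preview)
Your proposal is correct and follows essentially the same route as the paper: start from Lemma~\ref{lemma2}, exploit the even/odd structure in $v$, substitute $v=t\sqrt{2/(s\eta)}$, and control the remainders via \eqref{e24}--\eqref{e27}.

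One point is worth sharpening. You approximate $\frac{v}{z(v)-1}=\frac{v}{c(s)+iv}\,(1+O(v))$ and later worry that ``the perturbations $O(v)$ spoil exact parity''. The paper sidesteps this by using the \emph{exact} symmetry $z(-v)=(z(v))^{\ast}$ for real $v$ (a consequence of all $c_k$ in \eqref{e32} being real). Pairing $v$ with $-v$ then gives the even part of the integrand in closed form,
\[
\frac{v}{z(v)-1}+\frac{-v}{z(-v)-1}
=\frac{-2iv^2+O(v^4)}{(z_{\rm sp}-1)^2+v^2-2c_2(z_{\rm sp}-1)\,v^2+O(v^4)},
\]
so the odd part vanishes \emph{exactly} against the even weight $\exp(sq(v))/(1-\exp(sq(v)))$ and nothing is ``spoiled''. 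The visible correction terms are then $O(v^4)$ in the numerator and $-2c_2(z_{\rm sp}-1)v^2+O(v^4)$ in the denominator; after the substitution these become relative errors of order $O(t^2/s)$ and $O(s^{-\alpha})$ respectively, which is exactly how the $O(s^{-\min(1,\alpha)})$ emerges. Your multiplicative $(1+O(v))$ bound, taken literally, would only yield $O(s^{-1/2})$ and would be too weak for $\alpha>1/2$; but if you track the parity of that $O(v)$ correction you recover the same estimate, since its leading part is odd. The conjugation trick just makes this automatic.

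Your remarks on the near-singular integrand when $\alpha>1/2$ are on target; the paper handles that via the elementary bound $e^x-1\ge x$ applied to $1-\exp(-d^2(s)-t^2)$, cf.\ \eqref{y53}--\eqref{y56}.
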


\begin{proof}
According to Lemma \ref{lemma2}, $\mu_Q$ is given with exponentially small error by (\ref{e37}) with $q(v)$ given in (\ref{e35}). Since $z({-}v)=z^{\ast}(v)$ for real $v$, we have
\begin{eqnarray} \label{e40}
\frac{v}{z(v)-1}+\frac{-v}{z({-}v)-1} &=& {-}2iv\,\frac{{\rm Im}(z(v))}{|z(v)-1|^2}\nonumber\\
&=&~\frac{-2iv^2+O(v^4)}{(z_{\rm sp}-1)^2+v^2-2c_2(z_{\rm sp}-1)\,v^2+O(v^4)} ,
\end{eqnarray}
where (\ref{e34}) and $c_k\in\mathbb{R}$ have been used. Using (\ref{e40}) in (\ref{e37}) and extending the integration range from $[{-}\tfrac12\delta,\tfrac12\,\delta]$ to $({-}\infty,\infty)$ while using symmetry of $q(v)$, we get that $\mu_Q$ is given with exponentially small error by
\begin{eqnarray} \label{e41}
\frac{s\,g''(z_{\rm sp})}{\pi}\,\il_0^{\infty}\,\frac{v^2+O(v^4)}{(z_{\rm sp}-1)^2+v^2-2c_2(z_{\rm sp}-1)\,v^2+O(v^4)}\frac{\exp(s\,q(v))}{1-\exp(s\,q(v))}\,dv .
\end{eqnarray}
With
\beq \label{e42}
B=\exp(s\,g(z_{\rm sp})) ,~~~~~~\eta=g''(z_{\rm sp}),
\eq
(\ref{e41}) takes the form
\begin{eqnarray} \label{e43}
\frac{s\eta}{\pi}\,\il_0^{\infty}\,\frac{v^2+O(v^4)}{(z_{\rm sp}-1)^2+v^2-2c_2(z_{\rm sp}-1)\,v^2+O(v^4)} \cdot \frac{B\,\exp({-}\tfrac12\,s\,\eta\,v^2)}{1-B\,\exp({-}\tfrac12\,s\,\eta\,v^2)}\,dv .
\end{eqnarray}
In leading order, the integrand in (\ref{e43}) has the form
\beq \label{e43a}
\frac{B\,v^2\,\exp(-s\,D\,v^2)}{(v^2+C\,s^{-2\alpha})(1-\exp({-}s\,D\,v^2))},
\eq
and this is reminiscent of the integrand in (\ref{e28}) for the case  $\kappa=2\alpha$. Hence, it is natural to proceed as in \cite[\S 5.12]{debruijn}. The substitution $v=t\sqrt{{2}/(s\eta)}$ brings (\ref{e43}) into the form
\begin{eqnarray} \label{e44}
\frac{2}{\pi}\sqrt{\tfrac12 s\,\eta}\il_0^{\infty}\frac{t^2(1+O(t^2/s))}{\tfrac12 s\,\eta(z_{\rm sp}-1)^2+t^2-2c_2(z_{\rm sp}-1)t^2+O(t^4/s)} \,\frac{B\,\exp({-}t^2)}{1-B\,\exp({-}t^2)}dt .
\end{eqnarray}
From (\ref{e24})--(\ref{e27}) and (\ref{e42}),
\begin{align}
\frac{2}{\pi}\,\sqrt{\frac{s\eta}{2}} &= \frac{2}{\pi}\,\sigma_X\,\sqrt{\frac{s}{2\,\mu_X}}\,(1+O(s^{-\alpha})),\label{y45}\\
\tfrac12\,s\,\eta\,(z_{\rm sp}-1)^2 &= d^2(s) + O(s^{1-3\alpha}),\label{y46}\\
2\,c_2(z_{\rm sp}-1) &= O(s^{-\alpha}),\label{y47}\\
s\,g(z_{\rm sp}) &= -d^2(s) + O(s^{1-3\alpha}),\label{y48}
\end{align}
where
\beq \label{y49}
d^2(s) = \frac{b_0^2}{s^{2\alpha-1}},\quad b_0^2 := \frac{\gamma^2\mu_X}{2\,\sigma_X^2}.
\eq
In the case that $2\alpha-1<0$, we have that $\tfrac12\,s\,\eta\,(z_{\rm sp}-1)^2 \to \infty$ and that
\beq \label{y50}
B = \exp(s\,g(z_{\rm sp})) = O(\exp({-}b^2\,s^{1-2\alpha}))
\eq
for any $b\in(0,b_0)$. From \eqref{e44} it follows then that $\mu_Q = O(\exp({-}b^2\,s^{1-2\alpha}))$ for any $b\in(0,b_0)$.
In the case that $2\,\alpha-1\geq 0$, we have that $d^2(s)$ is bounded, and using that $1/s^{3\alpha-1} = O(d^2(s)/s^\alpha)$, we get
\begin{align}
\tfrac12\,s\,\eta\,(z_{\rm sp}-1)^2 &+ t^2-2\,c_2\,(z_{\rm sp}-1)\,t^2+O(t^4/s) \nonumber\\
&= d^2(s) + t^2 + O\left(s^{-\alpha}\,(d^2(s)+t^2)\right) + O(t^4/s)\nonumber\\
&= \left(d^2(s)+t^2\right)\left(1+O(s^{-\alpha})+O(t^2/s)\right).\label{y51}
\end{align}
Hence, in this case,
\beq \label{y52}
\frac{t^2(1+O(t^2/s))}{\tfrac12 s\,\eta(z_{\rm sp}-1)^2+t^2-2c_2(z_{\rm sp}-1)t^2+O(t^4/s)}
 = \frac{t^2}{d^2(s)+t^2}\left(1+O(s^{-\alpha})+O(t^2/s)\right),
\eq
where we restrict to $t$ in a range $[0,s^{1/4}]$. Furthermore,
\begin{align}
1-B\,\exp(-t^2) &= 1-\exp({-}d^2(s)-t^2)\,\left(1+d^2(s)\,O(s^{-\alpha})\right)\nonumber\\
&= (1-\exp({-}d^2(s)-t^2))\,\Bigl(1+\frac{d^2(s)\,\exp(-d^2(s)-t^2)}{1-\exp({-}d^2(s)-t^2)}O(s^{-\alpha})\Bigr)\nonumber\\
&=(1-\exp({-}d^2(s)-t^2))\,\Bigl(1+\frac{d^2(s)}{\exp(d^2(s)+t^2)-1}O(s^{-\alpha})\Bigr)\nonumber\\
&=(1-\exp({-}d^2(s)-t^2))\,\Bigl(1+\frac{d^2(s)}{d^2(s)+t^2}O(s^{-\alpha})\Bigr)\nonumber\\
&= (1-\exp({-}d^2(s)-t^2))\,(1+O(s^{-\alpha})),\label{y53}
\end{align}
where it has been used that
\beq \label{y55a}
e^x-1\geq x,\quad x\geq 0.
\eq
It follows therefore that
\beq \label{y56}
\frac{B\,\exp({-}t^2)}{1-B\,\exp({-}t^2)} = \frac{\exp({-}d^2(s)-t^2)}{1-\exp({-}d^2(s)-t^2)}\,(1+O(s^{-\alpha})).
\eq
Combining the three items \eqref{y45}, \eqref{y52} and \eqref{y56}, we obtain for \eqref{e44} the result
\beq \label{y58}
\frac{2}{\pi}\,\sigma_X\,\sqrt{\frac{s}{2\,\mu_X}}  \il_0^{\infty}\frac{t^2}{d^2(s)+t^2} \cdot \frac{\exp({-}d^2(s)-t^2)}{1-\exp({-}d^2(s)-t^2)}dt
\left(1+O(s^{-\alpha})+O(s^{-1})\right),
\eq
where the integration range $[0,\infty)$ is, at the expense of relative errors of type\\ $\exp({-}s^{1/4})$, first restricted to the range $[0,s^{1/4}]$, where \eqref{y52} holds, and then restored again to the full range. \end{proof}

Theorem \ref{mainthm} gives the leading-order behavior of $\mu_Q$ as $s\pr\infty$ with a relative error of $O(s^{{-}\min(1,\alpha)})$. By considering in more detail the integral expressions, we obtain the following result, describing two different heavy-traffic behaviors.

\begin{proposition}\label{prop1}
If $\alpha\in(0,1/2)$ the mean congestion level satisfies
\beq \label{y59}
\mu_Q=O\left(\exp(-b^2s^{1-2\alpha})\right),
\eq
for any $b\in (0,b_0)$. If $\alpha\in[1/2,\infty)$ the mean congestion level $\mu_Q$ is given by
\beq \label{y60}
s^\alpha\,\frac{\sigma_X^2}{2\mu_X\gamma}\,\left(1+O(s^{\max(1/2-\alpha,-1)})\right).
\eq
\end{proposition}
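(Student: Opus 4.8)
The plan is to start from the integral representation in Theorem \ref{mainthm} and analyze the behavior of the integral
\[
I(d) := \il_0^\infty \frac{t^2}{d^2+t^2}\,\frac{\exp(-d^2-t^2)}{1-\exp(-d^2-t^2)}\,dt
\]
as a function of $d=d(s)$, distinguishing the two regimes according to the sign of $1-2\alpha$. For $\alpha\in(0,1/2)$ we have $d^2(s)=b_0^2 s^{1-2\alpha}\to\infty$, so the dominant factor $\exp(-d^2-t^2)/(1-\exp(-d^2-t^2))$ is, uniformly in $t\geq 0$, bounded by a constant times $\exp(-d^2-t^2)$ (indeed $x/(1-x)\le \frac{1}{1-e^{-d^2}}x$ for $x=e^{-d^2-t^2}\le e^{-d^2}$). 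Pulling out $\exp(-d^2)$ and bounding $t^2/(d^2+t^2)\le 1$ leaves a convergent Gaussian-type integral, so $I(d)=O(\exp(-d^2))$. Multiplying by the prefactor $\frac{2}{\pi}\sigma_X\sqrt{s/(2\mu_X)}=O(s^{1/2})$, which is subexponential, gives $\mu_Q=O(s^{1/2}\exp(-b_0^2 s^{1-2\alpha}))$; absorbing the polynomial factor into the exponential by replacing $b_0$ with any $b<b_0$ yields \eqref{y59}. (This matches, and slightly re-derives, the estimate \eqref{y50}--\eqref{y52} already obtained inside the proof of Theorem \ref{mainthm}, so one could also just cite that.)

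For $\alpha\geq 1/2$ the quantity $d^2(s)=b_0^2 s^{1-2\alpha}$ is bounded: it tends to $b_0^2$ when $\alpha=1/2$ and to $0$ when $\alpha>1/2$. The key step is to show that as $d\to 0$,
\[
I(d)= \frac{1}{2d}\,\Bigl(1+O(d)\Bigr)\quad\text{(roughly)},
\]
so that the leading singular behavior of $I(d)$ as $d\downarrow 0$ is $\frac{\pi}{4d}$ — more precisely one expects $I(d)\sim \frac{\pi}{4d}$. To see where this comes from, near $t=0$ the integrand behaves like $\frac{t^2}{d^2+t^2}\cdot\frac{1}{d^2+t^2}$ (using $\exp(-d^2-t^2)/(1-\exp(-d^2-t^2))\sim 1/(d^2+t^2)$ for small argument, by \eqref{y55a} and the bound $e^x-1=x(1+O(x))$), and $\int_0^\infty \frac{t^2}{(d^2+t^2)^2}\,dt = \frac{\pi}{4d}$ exactly. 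The contribution from the region where $t$ is bounded away from $0$ is $O(1)$, hence lower order. Substituting $d=d(s)$ and combining with the prefactor, for $\alpha=1/2$ one gets $\mu_Q=\frac{2}{\pi}\sigma_X\sqrt{s/(2\mu_X)}\cdot(\frac{\pi}{4b_0}+O(1))$; since $b_0=\gamma\sqrt{\mu_X}/(\sqrt{2}\sigma_X)$ this simplifies to $s^{1/2}\sigma_X^2/(2\mu_X\gamma)\,(1+O(s^{-1/2}))$, which is \eqref{y60} with $\alpha=1/2$. For $\alpha>1/2$, $d(s)=b_0 s^{(1-2\alpha)/2}\to 0$, so $I(d(s))=\frac{\pi}{4d(s)}(1+O(d(s)))=\frac{\pi}{4b_0}s^{(2\alpha-1)/2}(1+O(s^{(1-2\alpha)/2}))$, and multiplying by the $O(s^{1/2})$ prefactor gives $\frac{\sigma_X^2}{2\mu_X\gamma}s^\alpha(1+O(s^{1/2-\alpha}))$; together with the $O(s^{-\min(1,\alpha)})=O(s^{-1})$ relative error already present from Theorem \ref{mainthm} one obtains the relative error $O(s^{\max(1/2-\alpha,-1)})$ claimed in \eqref{y60}.

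The main obstacle is the uniform control of the expansion $I(d)=\frac{\pi}{4d}(1+O(d))$ as $d\downarrow 0$ — that is, making precise the claim that the contribution away from $t=0$, together with the corrections to $\exp(-d^2-t^2)/(1-\exp(-d^2-t^2))\approx 1/(d^2+t^2)$ and to $t^2/(d^2+t^2)\approx$ its value, all contribute only $O(1)$, i.e.\ a relative error $O(d)$ against the $\frac{\pi}{4d}$ main term. Concretely I would split $\int_0^\infty = \int_0^{\sqrt d} + \int_{\sqrt d}^\infty$ (or use a fixed cutoff), bound the tail integral by a $d$-independent constant using that $\frac{t^2}{d^2+t^2}\le 1$ and $\frac{\exp(-d^2-t^2)}{1-\exp(-d^2-t^2)}\le\frac{\exp(-t^2)}{1-\exp(-t^2)}$ is integrable on $[\sqrt d,\infty)$ with an integral that is $O(1)$ (in fact it diverges only logarithmically as the lower limit $\to 0$, so it is $O(\log(1/d))=O(d^{-1})\cdot o(1)$, which is fine), and on the inner region Taylor-expand both rational factors and the exponential factor in powers of $t$ and $d$, reducing everything to elementary integrals $\int_0^\infty \frac{t^{2k}}{(d^2+t^2)^2}\,dt$, each of which scales as a known power of $d$. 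Care is needed because several competing error terms ($O(s^{-\alpha})$, $O(t^2/s)$ from Theorem \ref{mainthm}, and the $O(d)$ from the small-$d$ expansion of $I$) must be combined into the single clean bound $O(s^{\max(1/2-\alpha,-1)})$; the exponents have to be tracked honestly, but no deep idea is involved beyond the exact evaluation $\int_0^\infty \frac{t^2}{(d^2+t^2)^2}\,dt=\frac{\pi}{4d}$.
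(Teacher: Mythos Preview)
Your treatment of $\alpha\in(0,1/2)$ matches the paper's (which simply cites \eqref{y50} and \eqref{e44}). For $\alpha\ge1/2$ you take a genuinely different route: the paper does not estimate $I(d)$ by direct asymptotics but decomposes $G_0(b)=G_1(b)-G_2(b)$, evaluates $G_1$ via the Lerch transcendent and $G_2$ via an ${\rm erfc}$ series, and obtains the exact convergent expansion \eqref{e56} in Riemann zeta values, $G_0(b)=\frac{\pi}{4b}+\frac{\sqrt\pi}{2}\zeta(1/2)+\frac{\pi}{4}b+\cdots$; truncating after the first term then yields \eqref{y62} and hence Proposition~\ref{prop1}. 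Your elementary argument via $\int_0^\infty t^2/(d^2+t^2)^2\,dt=\pi/(4d)$ is sufficient for the proposition and avoids the special-function machinery; the paper's route pays off later because the full $\zeta$-series is exactly what is needed for the Gaussian-random-walk connection in \S\ref{subsec3.2} and the refined approximations in \S\ref{subsec3.3}.

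A few details need tightening. Use a \emph{fixed} cutoff (e.g.\ $t_0=1$) rather than $\sqrt d$: your claim that the tail from $\sqrt d$ ``diverges only logarithmically'' is wrong (it is of order $d^{-1/2}$, since the integrand behaves like $1/(d^2+t^2)$ near $t=0$), and the integrals $\int_0^\infty t^{2k}/(d^2+t^2)^2\,dt$ you invoke diverge for $k\ge2$. With a fixed cutoff the argument is clean: on $[1,\infty)$ the integrand is bounded by $e^{-t^2}/(1-e^{-1})$, giving an $O(1)$ tail, and on $[0,1]$ boundedness of $\frac{1}{e^u-1}-\frac{1}{u}$ for $u\in(0,2]$ gives $\int_0^1\frac{t^2}{(d^2+t^2)^2}\,dt+O(1)=\frac{\pi}{4d}+O(1)$. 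Also, for $\alpha=1/2$ the parameter $d(s)=b_0$ is a fixed constant, so no small-$d$ expansion applies and you do not obtain relative error $O(s^{-1/2})$; fortunately the proposition only asserts $O(s^0)=O(1)$ at $\alpha=1/2$, which is immediate from Theorem~\ref{mainthm}. Finally, $O(s^{-\min(1,\alpha)})=O(s^{-1})$ only when $\alpha\ge1$; for $\alpha\in(1/2,1)$ the Theorem~\ref{mainthm} error is $O(s^{-\alpha})$, but it is still dominated by your $O(d(s))=O(s^{1/2-\alpha})$, so the combined bound $O(s^{\max(1/2-\alpha,-1)})$ is correct.
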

The first assertion in Proposition \ref{prop1} follows from the observation in \eqref{y50}, together with \eqref{e44}. The second assertion is based on a connection between the integral in Theorem \ref{mainthm} and the Riemann zeta function, which is explained in the next subsection.

\subsection{Classical heavy traffic and the Gaussian random walk}
\label{subsec3.2}
We now build on Theorem \ref{mainthm} to obtain further results for the  classical heavy traffic case $\alpha=1/2$,
for which we know from \cite[Theorem 3]{nd2} that the rescaled congestion process converges under \eqref{bb1} to a reflected Gaussian random walk. The latter is defined as
 $(S_\beta(k))_{k\geq 0}$ with $S_\beta(0)=0$ and
\begin{equation}
S_\beta(k)=Y_1+\ldots+Y_k
\end{equation}
with $Y_1,Y_2,\ldots$ i.i.d.~copies of a normal random variable with mean $-\beta$ and variance 1.
Assume $\beta>0$ (negative drift), and denote the all-time maximum of this random walk by ${M}_\beta$.

 Denote by $Q^{(s)}_\infty$ the stationary congestion level for a fixed $s$ (that arises from taking
 $k\to \infty$ in \eqref{lind}), and remember that we have assumed $\vartheta=n/s$ fixed.
Then, using $\rho_s=1-\gamma/\sqrt{s}$, with
 \begin{equation}\label{gammachoice}
 \gamma=\frac{\beta\sigma_X}{\mu_X\sqrt{\vartheta}},
\end{equation}
the spatially-scaled stationary congestion levels reach the limit
$Q^{(s)}_\infty/(\sigma_X\sqrt{n}) \stackrel{d}{\to} {M}_\beta$ as $s,n\to\infty$ (see \cite{jelenkovic,nd1,nd2}). From \cite[Theorem 4]{nd2} we then know that under the standard heavy-traffic scaling \eqref{bb1}
   \eqan{
   \frac{\mathbb{E}Q^{(s)}_\infty}{\sigma_X\sqrt{n}}\to \mathbb{E}{M}_\beta, \quad {\rm as} \ s,n\to\infty,
   }
   from which it follows that
   \beq \label{e48}
\mu_Q\approx \sigma_X\sqrt{n}\ \mathbb{E}M_\beta.
\eq
The random variable ${M}_\beta$ was studied in  \cite{changperes,jllerch}. In particular, \cite[Thm.~2]{jllerch} yields, for $\beta<2\sqrt{\pi}$,
\begin{eqnarray}\label{wdfegfw571}
\mathbb{E}{M}_\beta= \frac{1}{2\beta}+\frac{\zeta(1/2)}{\sqrt{2\pi}}+\frac{\beta}{4}+\frac{\beta^2}{\sqrt{2\pi}}\sum_{r=0}^{\infty}\frac{\zeta(-1/2-r)}{r!(2r+1)(2r+2)}\left(\frac{-\beta^2}{2 }\right)^r,
\end{eqnarray}
and hence, for small values of $\beta$,
   \beq \label{estimate}
\mu_Q\approx \sigma_X\sqrt{n}\ \mathbb{E}M_\beta \approx \frac{\sigma_X\sqrt{n}}{2\beta} = \sqrt{s}\,\frac{\sigma_X^2}{2\mu_X\gamma}.
\eq
We will now show how the approximation \eqref{estimate} follows from Theorem \ref{mainthm}, and also how similar steps give rise to Proposition \ref{prop1}.

Consider the integral
\beq \label{e49}
G_0(b)=G_1(b)-G_2(b)=\il_0^{\infty}\,\frac{t^2}{b^2+t^2}~\frac{\exp({-}b^2-t^2)}{1-\exp({-}b^2-t^2)}\,dt ,
\eq
where $b>0$ and
\beq \label{e50}
G_1(b)=\il_0^{\infty}\,\frac{\exp({-}b^2-t^2)}{1-\exp({-}b^2-t^2)}\,dt\,,~~~~G_2(b)=\il_0^{\infty}\,\frac{b^2}{b^2+t^2}~\frac{\exp({-}b^2-t^2)}{1-\exp({-}b^2-t^2)}\,dt .
\eq
We have, as in \cite[\S 2]{jllerch},
\begin{align} \label{e51}
G_1(b) & =  \sum_{k=0}^{\infty}\:\il_0^{\infty}\,\exp({-}(k+1)(b^2+t^2))\,dt \nonumber \\
& =  \frac{\sqrt{\pi}}{2}\,\sum_{k=0}^{\infty}\,\frac{e^{-(k+1)b^2}}{\sqrt{k+1}} = \frac{\sqrt{\pi}}{2}\,e^{-b^2}\,\Phi(e^{-b^2},1/2,1) \nonumber \\
& =  \frac{\pi}{2b}+\frac{\sqrt{\pi}}{2}\,\sum_{r=0}^{\infty}\,\zeta(\tfrac12-r)\,\frac{({-}1)^r\,b^{2r}}{r!} ,
\end{align}
where $\Phi(z,s,v)$ is Lerch's transcendent and where the last identity holds when $0<b<\sqrt{2\pi}$.

As to $G_2(b)$, we make a connection with the complementary error function
\beq \label{e52}
{\rm erfc}(z)=\frac{2}{\sqrt{\pi}}\,\il_z^{\infty}\,e^{-t^2}\,dt=\frac{2}{\pi}\,e^{-z^2}\,\il_0^{\infty}\,\frac{e^{-z^2t^2}}{1+t^2}\,dt ,
\eq
see \cite[Secs.~7.2 and 7.7.1]{ref5}. We thus compute
\begin{align} \label{e53}
G_2(b) & =  \sum_{k=0}^{\infty}\,e^{-(k+1)b^2}\,\il_0^{\infty}\,\frac{b^2}{b^2+t^2}\,e^{-(k+1)t^2}\,dt \nonumber \\
& =  \frac{\pi}{2}\,b\,\sum_{k=0}^{\infty}\,{\rm erfc}(b\,\sqrt{k+1}) .
\end{align}
From \cite[(4.3) and (4.23)]{jllerch},
\beq \label{e54}
\sum_{n=1}^{\infty}\,\frac{1}{\sqrt{2\pi}}\,\il_{\beta\sqrt{n}}^{\infty}\,e^{-x^2/2}\,dx= \frac{1}{2\beta^2}-\frac14-\frac{1}{\sqrt{2\pi}}\,\sum_{r=0}^{\infty} \frac{\zeta({-}1/2-r)({-}1/2)^r} {r!\,(2r+1)}\,\beta^{2r+1}
\eq
in which $0<\beta<2\sqrt{\pi}$. Taking $\beta=b\,\sqrt{2}$ in (\ref{e54}), we get
\beq \label{e55}
G_2(b)=\frac{\pi}{4b}-\frac{\pi}{4}\,b-\sqrt{\pi}\,\sum_{r=0}^{\infty}\,\frac{\zeta({-}1/2-r)({-}1)^r\,b^{2r+2}}{r!\,(2r+1)}
\eq
when $0<b<\sqrt{2\pi}$. The two results in (\ref{e51}) and (\ref{e55}) can be combined, as in \cite[end of \S\ref{sec4}]{jllerch}, and this yields
\beq \label{e56}
G_0(b)=\frac{\pi}{4b}+\frac{\pi}{4}\,b+\frac{\sqrt{\pi}}{2}\,\zeta(1/2)+\sqrt{\pi}\,\sum_{r=0}^{\infty}\frac{\zeta({-}1/2-r)({-}1)^r\,b^{2r+2}}{r!\,(2r+1)(2r+2)}
\eq
when $0<b<\sqrt{2\pi}$. Note that in \cite[\S 6]{jllerch} and \cite[Theorem~2]{cumulants} alternative infinite series expressions are given for the series in (\ref{e56}) that converge for all $b>0$.

Using (\ref{e56}) in (\ref{e48}), we find that the leading order behavior of $\mu_Q$ is given as
\beq \label{e57}
\sigma_X\,\sqrt{\dfrac{s}{2\mu_X}}\,\left[\frac{1}{2b_0}+\frac{b_0}{2}+\frac{\zeta(1/2)}{\sqrt{\pi}}+\frac{2}{\sqrt{\pi}}\,\sum_{r=0}^{\infty}\,\frac{\zeta({-}1/2-r)({-}1)^r b_0^{2r+2}} {r!\,(2r+1)(2r+2)}\right]
\eq
with relative error of $O(s^{-1/2})$ in which $b_0$ is given by \eqref{y49}. The expression (\ref{e57}) is exactly equal to the right-hand side of \cite[(4.25)]{jllerch} times $\sqrt{s}$ when we take there $\sigma=\mu=1$ and $\beta=b_0\,\sqrt{2}$.
Notice that, with $\gamma$ as in \eqref{gammachoice},
\beq \label{e57a}
\sigma_X\,\sqrt{\dfrac{s}{2\mu_X}}\frac{1}{2b_0}=\frac{\sigma_X\sqrt{n}}{2\beta},
\eq
which confirms the approximation \eqref{estimate}.

According to Theorem \ref{mainthm}, we have for $\alpha\geq 1/2$,
\beq \label{y61}
\mu_Q = \frac{2}{\pi}\,\sigma_X\,\sqrt{\frac{s}{2\,\mu_X}}G_0(d(s))\,\left(1+O(s^{{-}\min(1,\alpha)})\right).
\eq
When $\alpha=1/2$, so that $d(s) = b_0$ is independent of $s$, the series representation for $G_0$ in \eqref{e56} can be used, as long as $b_0\in(0,\sqrt{2\pi})$. When $\alpha>1/2$, we have that $d(s) = b_0/s^{\alpha-1/2}\to 0$ as $s\to\infty$, and so this series representation can be used when $s$ is large enough. We then have from \eqref{e56} and $b_0^2 = \gamma^2\mu_X/2\,\sigma_X^2$, while replacing the whole series at the right-hand side by $O(b^2)$, for $\mu_Q$ the leading order behavior
\beq \label{y62}
s^\alpha\left[\frac{\sigma_X^2}{2\,\gamma\,\mu_X}+\frac{\sigma_X\,\zeta(1/2)}{\sqrt{2\,\pi\,\mu_X}}\,\frac{1}{s^{\alpha-1/2}}+\frac{1}{4}\,\gamma\,\frac{1}{s^{2\alpha-1}}+O(s^{3/2-3\alpha})\right]
\eq
with relative error $O(s^{{-}\min(1,\alpha)})$. Retaining the constant term $\sigma_X^2/(2\gamma\mu_X)$ and estimating the other terms between the brackets in \eqref{y62} as $O(s^{1/2-\alpha})$, we get Proposition \ref{prop1}.
\section{More heavy-traffic results}\label{more}
In this section we apply the non-standard saddle point method to obtain several more heavy-traffic results. In \S\ref{subsec3.3} we derive refined heavy-traffic approximations for the mean congestion level by considering higher-order correction terms. In \S\ref{sec4} we derive the leading heavy-traffic behavior for the variance of the stationary congestion level, and in \S\ref{sec5} for the empty-system probability. To keep the developments tractable, we restrict \S\ref{subsec3.3} to $\alpha=1/2$, and \S\ref{sec4} and \S\ref{sec5} to $\alpha\in(0,1]$, although the same technique will work for all values $\alpha>0$.

\subsection{Correction term for the mean congestion level in the case $\alpha = 1/2$} \label{subsec3.3}
Our saddle point method not only establishes the leading-order heavy-traffic approximations, but also allows to derive refinements to these approximations. In this section we demonstrate how this works for the mean congestion level in the case $\alpha=1/2$.

To obtain a refinement or correction term from (\ref{e44}), we must be more precise about the $O(s^{{-}\alpha})$ terms that occur in the approximations in \S\ref{subsec3.1} for $\frac12\,s\,\eta(z_{\rm sp}-1)^2$, $B$ and $\sqrt{s\,\eta/2}$. When higher-order corrections are required, we should include higher-order terms in the approximations of these quantities, and be more specific about the $O(t^2/s)$ and $O(t^4/s)$  in the integrand in (\ref{e44}).

Denote, see \eqref{e10} and (\ref{e15}) with $\vart=(1-\gamma/s^\alpha)\,\mu_X^{-1}$,
\beq \label{e58}
a_i=g^{(i)}(1);~~~~~~g(z)={-}{\rm ln}\,z+\vart\,{\rm ln}\,X(z) .
\eq
Dropping the $X$ from $\mu_X$ and $\sigma_X^2$ for brevity, we have
\beq \label{e59}
a_1={-}\,\frac{\gamma}{s^\alpha} ,~~~~~~a_2=\frac{\sigma^2}{\mu}-\frac{\gamma}{s^\alpha}\,\Bigl(\frac{\sigma^2}{\mu}-1\Bigr) ,
\eq
\beq \label{e60}
a_3={-}2+\Bigl(1-\frac{\gamma}{s^\alpha}\Bigr)\Bigl(\frac{X'''(1)}{X'(1)}-3X''(1)+2(X'(1))^2\Bigr) .
\eq
For the purpose of finding a first-order correction term, we note that
\begin{align} \label{e61}
\eta&=g''(z_{\rm sp})=a_2+(z_{\rm sp}-1)\,a_3+O(s^{-1}) ,\\
\label{e62}
z_{\rm sp}-1&={-}\,\frac{a_1}{a_2}-\frac{a_3}{2a_2}\,\Bigl(\frac{a_1}{a_2}\Bigr)^2+O(s^{-3/2}) ,\\
 \label{e63}
c_2&={-}\,\frac{g'''(z_{\rm sp})}{6g''(z_{\rm sp})}={-}\,\frac{a_3}{6a_2}+O(s^{-1/2}) ,\\
 \label{e64}
g(z_{\rm sp})&={-}\,\frac{a_1^2}{2a_2}-\frac{a_3}{6a_2^3}\,a_1^3+O(s^{-2}) .
\end{align}
This gives rise to
\begin{align} \label{e65}
\sqrt{\tfrac12\,s\,\eta}&=\sigma\,\sqrt{\dfrac{s}{2\mu}}\,\Bigl(1+\frac{C_1}{\sqrt{s}}+O(s^{-1})\Bigr) ,\\
\tfrac12\,s\,\eta(z_{\rm sp}-1)^2&=\frac{\gamma^2\,\mu}{2\sigma^2}+\frac{C_2}{\sqrt{s}}+O(s^{-1}) ,\\
\label{e67}
2c_2(z_{\rm sp}-1)&=\frac{C_3}{\sqrt{s}}+O(s^{-1}) ,\\
 \label{e68}
B=\exp(s\,g(z_{\rm sp}))&=\exp\Bigl({-}\,\frac{\gamma^2\,\mu}{2\sigma^2}\Bigr)\Bigl(1+\frac{C_4}{\sqrt{s}}+O(s^{-1})\Bigr) ,
\end{align}
with explicitly computable constants $C_1$, $C_2$, $C_3$, $C_4$. Remembering that $b_0^2=\gamma^2\mu/2\sigma^2$, see \eqref{y49}, we then get with errors of order $1/s$
\begin{eqnarray} \label{e69}
& \mbox{} & \frac{t^2(1+O(t^2/s))}{\frac12\,s\,\eta(z_{\rm sp}-1)^2+t^2-2c_2(z_{\rm sp}-1)\,t^2+O(t^4/s)} \nonumber \\[3mm]
& & =~\frac{t^2}{b_0^2+t^2}-\frac{1}{\sqrt{s}}\,\Bigl((C_2+ b_0^2\,C_3)\,\frac{t^2}{(b_0^2+t^2)^2}-C_3\,\frac{t^2}{b_0^2+t^2}\Bigr) ,
\end{eqnarray}
and
\beq \label{e70}
\frac{B\,\exp({-}t^2)}{1-B\,\exp({-}t^2)}=\frac{\exp({-}b_0^2-t^2)}{1-\exp({-}b_0^2-t^2)}+\frac{C_4}{\sqrt{s}}~\frac{\exp({-}b_0^2-t^2)}{(1-\exp({-}b_0^2-t^2))^2} .
\eq
Using (\ref{e65}), (\ref{e69}) and (\ref{e70}) in (\ref{e44}) we get with an absolute error of order $1/\sqrt{s}$
\begin{eqnarray} \label{e71}
\mu_Q & = & \frac{2}{\pi}\,\sigma\,\sqrt{\dfrac{s}{2\mu}}\,\Bigl(1+\frac{C_1}{\sqrt{s}}\Bigr) \il_0^{\infty}\,\Bigl(\frac{t^2}{b_0^2+t^2}-\frac{1}{\sqrt{s}}\,\Bigl((C_2+b_0^2\,C_3)\,\frac{t^2}{(b_0^2+t^2)^2}-C_3\,\frac{t^2}{b_0^2+t^2}\Bigr)\Bigr) \nonumber \\[3.5mm]
& & \cdot~\Bigl(\frac{\exp({-}b_0^2-t^2)}{1-\exp({-}b_0^2-t^2)}+\frac{C_4}{\sqrt{s}}~\frac{\exp({-}b_0^2-t^2)}{(1-\exp({-}b_0^2-t^2))^2}\Bigr)\,dt \nonumber \\[3.5mm]
& = & \frac{2\sigma}{\pi}\,\sqrt{\dfrac{s}{2\mu}}\,G_0(b_0)\nonumber\\
& &+ ~\frac{2\sigma}{\pi}\,\sqrt{\dfrac{1}{2\mu}}\,((C_1+C_3)\,G_0(b_0)-(C_2+b_0^2\,C_3)\,G_3(b_0)+C_4\,G_4(b_0)) ,
\end{eqnarray}
where $G_0$ is as in (\ref{e49}), and
\begin{align} \label{e72}
G_3(b_0)&=\il_0^{\infty}\,\frac{t^2}{(b_0^2+t^2)^2}~\frac{\exp({-}b_0^2-t^2)}{1-\exp({-}b_0^2-t^2)}\,dt ,\\
\label{e73}
G_4(b_0)&=\il_0^{\infty}\,\frac{t^2}{b_0^2+t^2}~\frac{\exp({-}b_0^2-t^2)}{(1-\exp({-}b_0^2-t^2))^2}\,dt .
\end{align}
We shall express the integrals in (\ref{e72}) and (\ref{e73}) in terms of $\zeta$-functions. By partial integration
\begin{align} \label{e74}
G_3(b) & =  \frac12\,\il_0^{\infty}\,\frac{1}{b^2+t^2}~\frac{\exp({-}b^2-t^2)}{1-\exp({-}b_0^2-t^2)}\,dt -~\il_0^{\infty}\,\frac{t^2}{b^2+t^2}~\frac{\exp({-}b^2-t^2)}{(1-\exp({-}b^2-t^2))^2}\,dt \nonumber \\
& =  \frac{1}{2b^2}\,G_2(b)-G_4(b) ,
\end{align}
see (\ref{e49}) and (\ref{e73}). Since $G_2(b)$ is expressed in terms of $\zeta$-functions in (\ref{e55}), it is sufficient to consider $G_4(b)$.

As to $G_4(b)$,
\beq \label{e75}
G_4(b)=G_5(b)-G_6(b) ,
\eq
where
\begin{align} \label{e76}
G_5(b)&=\il_0^{\infty}\,\frac{\exp({-}b^2-t^2)}{(1-\exp({-}b^2-t^2))^2}\,dt ,\\
 \label{e77}
G_6(b)&=\il_0^{\infty}\,\frac{b^2}{b^2+t^2}~\frac{\exp({-}b^2-t^2)}{(1-\exp({-}b^2-t^2))^2}\,dt .
\end{align}
We have, compare (\ref{e51}),
\begin{align} \label{e78}
G_5(b) & =  \sum_{k=0}^{\infty}\,(k+1)\,\il_0^{\infty}\,e^{-(k+1)(b^2+t^2)}\,dt \nonumber \\[3.5mm]
& =  \frac{\sqrt{\pi}}{2}\,e^{-b^2}\,\Phi(e^{-b^2},{-}\tfrac12,1) =  \frac{\pi}{4b^3}+\frac{\sqrt{\pi}}{2}\,\sum_{r=0}^{\infty}\,\zeta({-}\tfrac12-r)\,\frac{({-}1)^r\,b^{2r}}{r!} ,
\end{align}
the last identity being valid when $0<b<\sqrt{2\pi}$. Next we have, compare (\ref{e53}),
\begin{align} \label{e79}
G_6(b) & =  \sum_{k=0}^{\infty}\,(k+1)\,b^2\,\il_0^{\infty}\,\frac{\exp({-}(k+1)(b^2+t^2))}{b^2+t^2}\,dt \nonumber \\
& =  \frac{\pi}{2}\,b\,\sum_{k=0}^{\infty}\,(k+1)\,{\rm erfc}(b\,\sqrt{k+1}) .
\end{align}
From \cite[(5.4) and (5.21)]{jllerch} we have
\begin{eqnarray} \label{e80}
\sum_{n=1}^{\infty}\frac{n}{\sqrt{2\pi}}\il_{\beta\sqrt{n}}^{\infty}e^{-x^2/2}\,dx = \frac{3}{4\beta^4}-\frac{1}{24}-\frac{1}{\sqrt{2\pi}}\sum_{r=0}^{\infty}\frac{\zeta({-}3/2-r)({-}1/2)^r}{r!\,(2r+1)}\,\beta^{2r+1}
\end{eqnarray}
when $0<\beta<2\sqrt{\pi}$. Taking $\beta=b\,\sqrt{2}$ in (\ref{e80}), we get
\beq \label{e81}
G_6(b)=\frac{3\pi}{16b^2}-\frac{\pi b}{24}-\sqrt{\pi}\,\sum_{r=0}^{\infty}\frac{\zeta({-}3/2-r)({-}1)^r}{r!\,(2r+1)}\,b^{2r+2}
\eq
when $0<b<\sqrt{2\pi}$. The two results (\ref{e78}) and (\ref{e81}) can be combined, as in \cite[end of \S 5]{jllerch} and this yields
\beq \label{e82}
G_4(b)=\frac{\pi}{16b^3}+\frac{\pi b}{24}+\tfrac12\,\zeta({-}1/2)\,\sqrt{\pi}+\sqrt{\pi}\,\sum_{r=0}^{\infty}\frac{\zeta({-}3/2-r)({-}1)^r\,b^{2r+2}}{r!\,(2r+1)(2r+2)}
\eq
when $0<b<\sqrt{2\pi}$.

Finally, we can use (\ref{e82}) in (\ref{e74}), and we obtain with (\ref{e55}), for $0<b<\sqrt{2\pi} $,
\begin{align} \label{e83}
G_3(b) = &  \frac{\pi}{16b^3}-\frac{\pi}{8b}-\frac{\pi b}{24}-\zeta({-}1/2)\,\sqrt{\pi}  -~2\sqrt{\pi}\,\sum_{r=0}^{\infty}\,\frac{\zeta({-}3/2-r)({-}1)^r\,b^{2r+2}}{r!\,(2r+1)(2r+2)(2r+3)}.
\end{align}
The right-hand side of (\ref{e83}) equals the right-hand side of \cite[(2.3)]{jllerch} multiplied by ${\pi}/{(2b)}$ with $\beta=b\,\sqrt{2}$. Again, \cite[\S 7]{jllerch} and \cite[Theorem~2]{cumulants} yield an alternative infinite series expression for the series in (\ref{e83}) that converges for all $b>0$.

\subsection{Heavy-traffic limits for the variance}\label{sec4}
We have from (\ref{e38}) in \S\ref{sec1}, using the same approach and notation as in \S\ref{subsec3.1} for $\mu_Q$, that $\sigma_Q^2$ is given with exponentially small error by
\beq \label{e84}
\frac{-s\,\eta}{2\pi i}\,\il_{-\frac12\delta}^{\frac12\delta}\,\frac{v\,z(v)}{(z(v)-1)^2}~\frac{B\,\exp({-}\frac12\,s\,\eta\,v^2)}{1-B\,\exp({-}\frac12\,s\,\eta\,v^2)}\,dv,
\eq
with $B$ and $\eta$ given in (\ref{e42}). From $z({-}v)=z^{\ast}(v)$ for real $v$ we now compute
\beq \label{e85}
\frac{z(v)}{(z(v)-1)^2}-\frac{z({-}v)}{(z({-}v)-1)^2}={-}2i\,\frac{|z(v)|^2-1}{|z(v)-1|^4}\,{\rm Im}(z(v)) ,
\eq
and so (\ref{e84}) becomes
\beq \label{e86}
\frac{s\eta}{\pi}\,\il_0^{\frac12\delta}\,\frac{|z(v)|^2-1}{|z(v)-1|^4}\,v\,{\rm Im}(z(v))\,\frac{B\,\exp({-}\frac12\,s\,\eta\,v^2)}{1-B\,\exp({-}\frac12\,s\,\eta\,v^2)}\,dv .
\eq
From
\beq \label{e87}
{\rm Im}(z(v))=v+O(v^3) ,~~~~~~|z(v)|^2-1=z_{\rm sp}^2-1+O(v^2) ,
\eq
we get for the expression in \eqref{e86}
\beq \label{y70}
\frac{s\eta}{\pi}\,\il_0^{\frac{1}{2}\delta}\,\frac{v^2\,(z_{\rm sp}^2-1+O(v^2))(1+O(v^2))}{((z_{\rm sp}-1)^2+v^2 + O((z_{\rm sp}-1)\,v^2)+O(v^4))^2}
\frac{B\,\exp({-}\frac12\,s\,\eta\,v^2)}{1-B\,\exp({-}\frac12\,s\,\eta\,v^2)}\,dv.
\eq
 When $2\alpha-1<0$, we have as for the case of $\mu_Q$ in \S\ref{subsec3.1} that the whole expression in \eqref{y70} is $O(\exp({-}b^2\,s^{1-2\alpha}))$ for any $b\in(0,b_0)$, as $s\to\infty$. When $2\alpha-1\geq 0$, we get as in the case of $\mu_Q$ after substitution $v = t\sqrt{{2}/{(s\,\eta})}$ for the expression in \eqref{y70}
\beq  \label{y71}
\frac{2}{\pi}\,\Bigl(\frac{s\,\eta}{2}\Bigr)^{3/2}\,\il_0^\infty\frac{t^2\,(z_{\rm sp}^2-1+O(t^2/s))(1+O(t^2/s))}{(d^2(s)+t^2)^2\,(1+O(1/s^{\alpha})+O(t^2/s))}~\frac{B\,e^{{-}t^2}}{1-B\,e^{{-}t^2}}\,dt
\eq
When $2\alpha-1\geq 0$, the leading order behavior of $\sigma_Q^2$ depends crucially on the factor $z_{\rm sp}^2-1+O(t^2/s)$. Here
\beq \label{y72}
z_{\rm sp}^2-1 = \frac{2\,\gamma\,\mu_X}{\sigma_X^2\,s^\alpha}\,\left(1+O(s^{-\alpha})\right)
\eq
is dominant when $\alpha<1$, while the $O(t^2/s)$ is dominant when $\alpha>1$. In the case that $\alpha\in(1/2,1)$, we get for the leading order behavior of $\sigma_Q^2$
\begin{align}
\frac{2}{\pi}\,\Bigl(\frac{s\,\eta}{2}\Bigr)^{3/2}& \,\frac{2\,\gamma\,\mu_X}{\sigma_X^2\,s^\alpha}\,\il_0^\infty\frac{t^2}{(d^2(s)+t^2)^2}\cdot~\frac{e^{{-}d^2(s)-t^2}}{1-e^{{-}d^2(s)-t^2}}\,dt\,\left(1+O(s^{\alpha-1})\right)\nonumber\\
&= \frac{\gamma\,\sigma_X}{\pi}\,\Bigl(\frac{2}{\mu_X}\Bigr)^{1/2}\,s^{3/2-\alpha}\,G_3(d(s))\,\left(1+O(s^{\alpha-1})\right), \label{y73}
\end{align}
where \eqref{e26}, \eqref{e27} and \eqref{e42} have been used for $\eta = g''(z_{\rm sp})$ and where $G_3$ is given in \eqref{e72}.

When we insert the expansion \eqref{e83} for $G_3(b)$, with the whole series on the second line being $O(b^2)$, we get the leading order behavior of $\sigma_Q^2$ as
\begin{align}
s^{2\alpha}\,\Bigl( \frac{\sigma_X^4}{4\,\gamma^2\mu_X^2}- \frac{\sigma_X^2}{4\,\mu_X}&\,\frac{1}{s^{2\alpha-1}} - \Bigl(\frac{2\,\sigma_X^2}{\pi\,\mu_X}\Bigr)^{1/2}\,\frac{\gamma\,\zeta(-1/2)}{s^{3\alpha-3/2}}\nonumber\\
& - \frac{\gamma^2}{24\,s^{5\alpha-5/2}}+O(s^{1-4\alpha})\Bigr)\,\left(1+O(s^{\alpha-1})\right)\nonumber \\
&\ = s^{2\alpha}\,\frac{\sigma_X^4}{4\,\gamma^2\,\mu_X^2}\,\Bigl(1+O(s^{\max(1-2\alpha,\alpha-1)})\Bigr)\label{y74}
\end{align}
when $\alpha\in(1/2,1)$. For the case $\alpha=1/2$, we get the leading order behavior, assuming $0<b_0<\sqrt{2\pi}$,
\begin{align}
\frac{\sigma_X^2 s}{\mu_X}\left[ \frac{1}{8\,b_0^2} - \frac{1}{4}-\frac{1}{12}\,b_0^2 - \frac{2\,\zeta(-1/2)}{\sqrt{\pi}}\,b_0- \frac{4}{\sqrt{\pi}}\,\sum_{r=0}^\infty \frac{\zeta(-3/2-r)\,(-1)^r\,b_0^{2r+3}}{r!\,(2r+1)\,(2r+2)\,(2r+3)} \right]\label{y75}
\end{align}
with relative error $O(s^{-1/2})$. The expression between brackets in \eqref{y75} coincides with the right-hand side of \cite{jllerch}, (2.3) with $\beta = b_0\,\sqrt{2}$.

This leads to the following two results.
\begin{theorem} \label{varthm}
For $\alpha\in[1/2,1)$,
\beq \label{y76}
\sigma_Q^2 = \frac{\gamma\,\sigma_X}{\pi}\,\sqrt{\frac{2}{\mu_X}}\,s^{3/2-\alpha}\,G_3(d(s))\,\left(1+O(s^{\alpha-1})\right)
\eq
with $G_3$ given in \eqref{e72}.
\end{theorem}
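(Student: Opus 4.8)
The statement \eqref{y76} is in essence the endpoint of the chain of displayed identities \eqref{e84}--\eqref{y73}, so the plan is to carry out that chain carefully and then consolidate the error terms.

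\textbf{Step 1 (reduction to a folded real integral).} I would start from the Pollaczek representation \eqref{e8} for $\sigma_Q^2$ and feed it through the substitution $z=z(v)$ of \S\ref{spSec}, exactly as was done for $\mu_Q$; this turns \eqref{e8}, with an exponentially small error, into \eqref{e84} with $B,\eta$ as in \eqref{e42}. The conjugation symmetry $z(-v)=z^{\ast}(v)$ for real $v$ together with the algebraic identity \eqref{e85} folds the integral onto $[0,\tfrac12\delta]$, giving \eqref{e86}. Next I would Taylor-expand the three geometric quantities $\mathrm{Im}\,z(v)$, $|z(v)|^2-1$ and $|z(v)-1|^2$ using the Lagrange expansion \eqref{e32}--\eqref{e34}, namely $\mathrm{Im}\,z(v)=v+O(v^3)$, $|z(v)|^2-1=z_{\rm sp}^2-1+O(v^2)$ and $|z(v)-1|^2=(z_{\rm sp}-1)^2+v^2+O((z_{\rm sp}-1)v^2)+O(v^4)$, which brings \eqref{e86} into the form \eqref{y70}.

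\textbf{Step 2 (rescaling and insertion of the small-parameter asymptotics).} Since we are in the range $2\alpha-1\ge0$, the substitution $v=t\sqrt{2/(s\eta)}$ together with extension of the $t$-range to $[0,\infty)$ at the cost of an $\exp(-s^{1/4})$-type error turns \eqref{y70} into \eqref{y71}. Now I would plug in the ingredients already established in \S\ref{subsec3.1}: $\eta=g''(z_{\rm sp})=\sigma_X^2/\mu_X+O(s^{-\alpha})$ from \eqref{e26}--\eqref{e27}, $\tfrac12 s\eta(z_{\rm sp}-1)^2=d^2(s)+O(s^{1-3\alpha})$ and $B=\exp(-d^2(s))(1+O(s^{-\alpha}))$ from \eqref{y46},\eqref{y48}, and, crucially, $z_{\rm sp}^2-1=\tfrac{2\gamma\mu_X}{\sigma_X^2 s^{\alpha}}(1+O(s^{-\alpha}))$ from \eqref{y72}. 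Because $\alpha<1$, this last factor has exact order $s^{-\alpha}$, so it is the dominant part of the numerator of \eqref{y71} on the relevant $t$-range; the manipulations used for $\mu_Q$ in \eqref{y51}--\eqref{y56} (relying on the elementary bound $e^x-1\ge x$ from \eqref{y55a}) then reduce the Bose--Einstein factor to $\exp(-d^2(s)-t^2)/(1-\exp(-d^2(s)-t^2))$ up to $1+O(s^{-\alpha})$ and the denominator to $(d^2(s)+t^2)^2(1+O(s^{-\alpha})+O(t^2/s))$. Pulling $z_{\rm sp}^2-1$ and $(s\eta/2)^{3/2}$ out in front and recognizing the surviving integral as $G_3(d(s))$ of \eqref{e72} yields \eqref{y73}, which is exactly the claimed formula \eqref{y76}.

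\textbf{Step 3 (error bookkeeping and the endpoint $\alpha=1/2$).} I would collect all relative errors incurred --- $O(s^{-\alpha})$ (from $B$, $\eta$ and the denominator), $O(s^{1-3\alpha})$ (inside $d^2(s)$), $O(t^2 s^{\alpha-1})$ from replacing $z_{\rm sp}^2-1+O(t^2/s)$ by its leading term (equivalently $O(s^{\alpha-1})$ after integration against the Gaussian-type weight), and $\exp(-s^{1/4})$ from the range restriction --- and observe that for $\alpha\in[1/2,1)$ one has $-\alpha\le\alpha-1$, $1-3\alpha\le\alpha-1$ and $-1\le\alpha-1$ (each equivalent to $\alpha\ge1/2$), so every one of these is $O(s^{\alpha-1})$; this is why the single error term $O(s^{\alpha-1})$ in \eqref{y76} suffices. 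The endpoint $\alpha=1/2$, where $d(s)=b_0$ is constant, is covered by the same computation with relative error $O(s^{-1/2})$, in agreement with \eqref{y75} after inserting the series \eqref{e83} for $G_3(b_0)$; hence the case split of \S\ref{subsec3.1} leaves no gap. The main obstacle is the uniform control of the $t$-integrand in \eqref{y71}: one must verify that the numerator factor $z_{\rm sp}^2-1$, which itself tends to $0$, genuinely remains the leading contribution against the competing $O(t^2/s)$ term once the $t$-range is cut to $[0,s^{1/4}]$ and then restored. This is precisely where the hypothesis $\alpha<1$ enters --- for $\alpha\ge1$ the $O(t^2/s)$ term is no longer negligible and the leading behaviour changes, as indicated near \eqref{y72} --- and it rests on the $s$-uniformity of the Lagrange inversion $z(v)$ as $z_{\rm sp}\to1$ noted after \eqref{e34}.
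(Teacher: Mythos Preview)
Your proposal is correct and follows essentially the same approach as the paper: you reproduce the chain \eqref{e84}--\eqref{y73} step by step, with the same saddle-point substitution, the same conjugation-folding via \eqref{e85}, the same rescaling $v=t\sqrt{2/(s\eta)}$, and the same recognition of $G_3(d(s))$ at the end. Your Step~3 makes the error bookkeeping (and in particular the inequalities $-\alpha\le\alpha-1$, $1-3\alpha\le\alpha-1$ valid for $\alpha\ge1/2$) more explicit than the paper does, but the argument is otherwise identical.
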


\begin{proposition}\label{varprop}
For $\alpha\in(0,1/2)$, and for all $b<b_0$,
\begin{equation}
\sigma_Q^2 = O(\exp({-}b^2\,s^{1-2\alpha})).
 \end{equation}
 For $\alpha = 1/2$, $\sigma_Q^2$ equals expression \eqref{y75} with relative error $O(s^{-1/2})$. For $\alpha\in(1/2,1)$ and $b_0\in(0,\sqrt{2\pi})$, $\sigma_Q^2$ has the form in \eqref{y74}.
\end{proposition}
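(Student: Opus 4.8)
The plan is to read off all three regimes from the integral representation \eqref{y70}--\eqref{y71} for $\sigma_Q^2$ established in \S\ref{sec4}, using Theorem \ref{varthm} together with the $\zeta$-function evaluation \eqref{e83} of $G_3$. Throughout, the bookkeeping runs parallel to the treatment of $\mu_Q$ in \S\ref{subsec3.1}--\S\ref{subsec3.2}, so the work is largely a matter of collecting estimates that are already in place.

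For $\alpha\in(0,1/2)$ I would argue from \eqref{y70} directly, exactly as is done for $\mu_Q$ just below \eqref{y50}. Since $2\alpha-1<0$, the estimate \eqref{y50} gives $B=\exp(s\,g(z_{\rm sp}))=O(\exp(-b^2 s^{1-2\alpha}))$ for every $b\in(0,b_0)$, so for $s$ large $B\le\tfrac12$ and hence $B\exp(-\tfrac12 s\eta v^2)/(1-B\exp(-\tfrac12 s\eta v^2))\le 2B$ uniformly on $[0,\tfrac12\delta]$. The remaining non-Gaussian factor in \eqref{y70} is continuous on the fixed compact interval $[0,\tfrac12\delta]$ and bounded there by a fixed power of $s$ (its maximum is attained near $v\sim z_{\rm sp}-1$). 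Since the prefactor $s\eta=O(s)$, I obtain $\sigma_Q^2=O(s^{k}\exp(-b^2 s^{1-2\alpha}))$ for a fixed exponent $k$; absorbing the polynomial factor into the exponential at the cost of shrinking $b$ slightly, and recalling that $b\in(0,b_0)$ was arbitrary, gives $\sigma_Q^2=O(\exp(-b^2 s^{1-2\alpha}))$ for all $b<b_0$.

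For $\alpha=1/2$ I would apply Theorem \ref{varthm} with $d(s)=b_0$ constant, $3/2-\alpha=1$ and $\alpha-1=-1/2$, so that $\sigma_Q^2=\tfrac{\gamma\sigma_X}{\pi}\sqrt{2/\mu_X}\,s\,G_3(b_0)\,(1+O(s^{-1/2}))$, and then substitute the closed form \eqref{e83} for $G_3(b_0)$, which is legitimate because $0<b_0<\sqrt{2\pi}$. Simplifying the prefactor using $b_0^2=\gamma^2\mu_X/(2\sigma_X^2)$ and collecting terms term-by-term reproduces \eqref{y75} with the stated relative error $O(s^{-1/2})$. For $\alpha\in(1/2,1)$ I would again invoke Theorem \ref{varthm}, now with $d(s)=b_0\,s^{1/2-\alpha}\to 0$; for $s$ large enough the series \eqref{e83} converges and can be inserted into $\tfrac{\gamma\sigma_X}{\pi}\sqrt{2/\mu_X}\,s^{3/2-\alpha}\,G_3(d(s))\,(1+O(s^{\alpha-1}))$. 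Multiplying $s^{3/2-\alpha}$ through the expansion of $G_3(d(s))$ turns the successive powers $d(s)^{-3},d(s)^{-1},1,d(s),d(s)^2,\dots$ into $s^{2\alpha},s,s^{3/2-\alpha},s^{2-2\alpha},s^{5/2-3\alpha},\dots$, of which $s^{2\alpha}$ dominates since $\alpha>1/2$, with coefficient simplifying (again via $b_0^2=\gamma^2\mu_X/(2\sigma_X^2)$) to $\sigma_X^4/(4\gamma^2\mu_X^2)$. The largest subleading relative scale is $s^{1-2\alpha}$ (from the $s$-term), which together with the $O(s^{\alpha-1})$ error of Theorem \ref{varthm} gives the claimed relative error $O(s^{\max(1-2\alpha,\alpha-1)})$, so that $\sigma_Q^2$ has the form \eqref{y74}.

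The genuinely delicate step is the uniform bound in the $\alpha\in(0,1/2)$ case: one must control the non-Gaussian prefactor in \eqref{y70} uniformly in $v\in[0,\tfrac12\delta]$ and in $s$, keeping track of the constants hidden in its $O(\cdot)$ terms, so that the exponential smallness of $B$ is not outweighed by the polynomial blow-up of that prefactor near $v\sim z_{\rm sp}-1$. The $\alpha\geq 1/2$ cases are then essentially routine, being the same $\zeta$-function bookkeeping already carried out for $\mu_Q$ in \S\ref{subsec3.2}.
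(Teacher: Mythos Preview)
Your proposal is correct and follows essentially the same route as the paper: the $\alpha\in(0,1/2)$ case is read off from \eqref{y70} via the exponential smallness of $B$ in \eqref{y50} (the paper simply says ``as for the case of $\mu_Q$''; you spell out the polynomial-vs-exponential absorption), and the $\alpha\in[1/2,1)$ cases are obtained by plugging the $\zeta$-series \eqref{e83} for $G_3$ into the integral form of Theorem~\ref{varthm}, exactly as the paper does to arrive at \eqref{y74}--\eqref{y75}. Your bookkeeping of the prefactor via $b_0^2=\gamma^2\mu_X/(2\sigma_X^2)$ and of the subleading relative errors matches the paper's.
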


As in \S\ref{subsec3.3} for the mean congestion level
with $\alpha=1/2$, it is possible to give a correction term which involves now integrals and series with $\zeta$-functions as considered in \cite[Secs.~4-5]{cumulants}.

\subsection{Heavy-traffic limits for the empty-system probability} \label{sec5}

We have from (\ref{e6}) by proceeding as in (\ref{e13})--(\ref{e17}) that
\begin{align} \label{e100}
{\rm ln}\,[Q(0)] & =  \frac{s}{2\pi i}\,\il_{|z|=1+\eps}\,{\rm ln}\Bigl(\frac{z}{z-1}\Bigr)\,\frac{g'(z)\,\exp(s\,g(z))}{1-\exp(s\,g(z))}\,dz \nonumber \\[3.5mm]
& =  \frac{1}{2\pi i}\,\il_{|z|=1+\eps}\,\frac{1}{z(z-1)}\,{\rm ln}\left(1-\exp(s\,g(z))\right)\,dz ,
\end{align}
where in the last step we used partial integration (noting that ${\rm Re}\,[g(z)]<0$ on $|z|=1+\eps$). Then, as in \S\ref{sec1} for $\mu_Q$, the last integral in (\ref{e100}) is, with exponentially small error, given by
\beq \label{e101}
\frac{1}{2\pi i}\,\il_{-\frac12\delta}^{\frac12\delta}\,\frac{z'(v)}{z(v)(z(v)-1)}\,{\rm ln}\left(1-B\,e^{-\frac12 s\eta v^2}\right)\,dv .
\eq
Now for $v\geq0$ from $z({-}v)=z^{\ast}(v)$, $z'({-}v)={-}(z'(v))^{\ast}$
\begin{eqnarray} \label{e102}
& \mbox{} & \hspace*{-6mm}\frac{z'(v)}{z(v)(z(v)-1)}+\frac{z'({-}v)}{z({-}v)(z({-}v)-1)}=2i\,{\rm Im}\,\Bigl[\frac{z'(v)}{z(v)(z(v)-1)}\Bigr] \nonumber \\[3.5mm]
& & \hspace*{-6mm}=~2i\,{\rm Im}\,\Bigl[\frac{z'(v)\,z^{\ast}(v)(z^{\ast}(v)-1)}{|z(v)|^2\,|z(v)-1|^2}\Bigr] \nonumber \\[3.5mm]
& & \hspace*{-6mm}=~2i\,\frac{z_{\rm sp}-1+O(v^2)}{(z_{\rm sp}+O(v^2))((z_{\rm sp}-1)^2+v^2-2c_2(z_{\rm sp}-1)\,v^2+O(v^4))}\,,
\end{eqnarray}
where we used \eqref{e32} and the fact that $z_{\rm sp}$ and $c_k$ are real with $z_{\rm sp}>1$. Therefore, we get for the expression in \eqref{e101}
\beq \label{y77}
\frac{1}{\pi}\il_0^{\frac{1}{2}\delta}\frac{1}{z_{\rm sp}{\rm +}O(v^2)}\frac{z_{\rm sp}-1+O(v^2)}{(z_{\rm sp}-1)^2+v^2+O((z_{\rm sp}-1)v^2)+O(v^4)}{\rm ln}\left(1-B\exp(-\tfrac12 s\eta v^2)\right)dv.
\eq
In the case that $2\alpha-1<0$, we have as earlier that the whole expression in \eqref{y77} is $O(\exp({-}b^2\,s^{1-2\alpha}))$ for any $b\in(0,b_0)$, as $s\to\infty$. In the case that $2\alpha-1\geq 0$, we substitute $v=t\sqrt{{s}/{(2\,\eta)}}$, and we get as earlier for the expression \eqref{y77}, assuming also that $\alpha<1$,
\begin{align}
\frac{1}{\pi}&\,\sqrt{s\,\eta/2}\,\il_0^{\infty}\frac{z_{\rm sp}-1+O(t^2/s)}{(d^2(s)+t^2)\,(1+O(s^{-\alpha})+O(t^2/s))}\,{\rm ln}(1-B\,e^{-t^2})dt\nonumber\\
&= \frac{1}{\pi}\,\il_0^{\infty}\frac{\sqrt{s\,\eta/2} \ (z_{\rm sp}-1)}{d^2(s)+t^2}{\rm ln}(1-B\,e^{-t^2})dt\,\left(1+O(s^{\alpha-1})\right)\nonumber\\
&= \frac{1}{\pi}\,\il_0^{\infty}\frac{d(s)}{d^2(s)+t^2}{\rm ln}(1-e^{{-}d^2(s)-t^2})dt\,\left(1+O(s^{\alpha-1})\right).
\label{y78}
\end{align}
Here we also used \eqref{y46} and that $1/s^{3\alpha-1} = O(d^2(s)/s^\alpha)$, so that
\beq \label{y79}
(\tfrac12\,s\,\eta)^{1/2}\,(z_{\rm sp}-1) = d(s)\,\left(1+O(s^{-\alpha})\right) = d(s)\left(1+O(s^{\alpha-1})\right),
\eq
since $\alpha\geq 1/2$.

We have for $b>0$
\begin{align}
\frac{1}{\pi}\,&\il_0^\infty \frac{b}{b^2+t^2}\,{\rm ln}(1-\exp({-}b^2-t^2))\,dt =-\frac12\,\sum_{k=0}^{\infty}\,\frac{1}{k+1}\,{\rm erfc}(b\,\sqrt{k+1}) = -F(b\,\sqrt{2}),\label{y80}
\end{align}
where according to \cite[(3.3) and (3.12)]{jllerch} for $\beta>0$
\begin{align}
F(\beta) &= \sum_{n=1}^\infty\,\frac{1}{n}\,\frac{1}{\sqrt{2\pi}}\,\il_{\beta\sqrt{n}}^\infty e^{-x^2/2}dx\nonumber\\
&= -{\rm ln}\,\beta - \frac12\,{\rm ln}2 - \frac{1}{\sqrt{2\pi}}\,\sum_{r=0}^\infty \frac{\zeta(1/2-r)\,(-1/2)^r\,\beta^{2r+1}}{r!\,(2r+1)},\label{y81}
\end{align}
the last identity being valid for $0<\beta<2\sqrt{\pi}$.

Using \eqref{y81} with $\beta^2 = d^2(s)= b_0^2/s^{2\alpha-1}$, with the entire series on the second line being $O(\beta)$, we get the leading order behavior of ${\rm ln}[Q(0)]$ as
\beq \label{y82}
\Bigl({-}(\alpha-1/2)\,{\rm ln}\,s+{\rm ln}(2\,b_0)+O(s^{1/2-\alpha})\Bigr)\left(1+O(s^{\alpha-1})\right)
\eq
when $\alpha\in(1/2,1)$. For $\alpha = 1/2$, we get the leading order behavior, assuming $0<b_0<\sqrt{2\pi}$,
\beq \label{y83}
{\rm ln}(2\,b_0) + \frac{1}{\sqrt{\pi}}\,\sum_{r=0}^\infty \,\frac{\zeta(1/2-r)\,(-1)^r}{r!\,(2r+1)}\,b_0^{2r+1}
\eq
with relative error $O(s^{-1/2})$. The expression \eqref{y83} coincides with ${\rm ln}[\mathbb{P}(M=0)]$ as given by \cite[(2.1)]{jllerch} with $\beta = b_0\,\sqrt{2}$. The next two results summarize the above.

\begin{theorem} \label{emptythm}
For $\alpha\in(1/2,1)$,
\beq \label{y84}
{\rm ln}[\mathbb{P}(Q=0)] = - F\big(d(s)\,\sqrt{2}\big)\left(1+O(s^{\alpha-1})\right)
\eq
with $F$ given by \eqref{y81}.
\end{theorem}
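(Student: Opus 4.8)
The starting point is the integral representation \eqref{y78} for $\mathrm{ln}[\mathbb{P}(Q=0)]$, which the preceding manipulations already establish (with exponentially small error, and then with relative error $1+O(s^{\alpha-1})$ after substituting $v = t\sqrt{s/(2\eta)}$ and using \eqref{y79}). That representation reads
\beq \label{planeq1}
\mathrm{ln}[\mathbb{P}(Q=0)] = \frac{1}{\pi}\,\il_0^{\infty}\frac{d(s)}{d^2(s)+t^2}\,\mathrm{ln}\bigl(1-e^{{-}d^2(s)-t^2}\bigr)\,dt\,\bigl(1+O(s^{\alpha-1})\bigr),
\eq
valid for $\alpha\in(1/2,1)$, where $d(s)=b_0/s^{\alpha-1/2}$. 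So the whole theorem comes down to identifying the integral in \eqref{planeq1} with $-F(d(s)\sqrt2)$, and that is exactly the content of \eqref{y80}: there I would verify that $\frac1\pi\int_0^\infty \frac{b}{b^2+t^2}\mathrm{ln}(1-e^{-b^2-t^2})\,dt = -F(b\sqrt2)$ for every $b>0$. Hence the proof is essentially a matter of (i) quoting \eqref{y78}, (ii) quoting \eqref{y80}, and (iii) specializing $b=d(s)$.

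The one computation that needs care is \eqref{y80} itself. First I would expand $\mathrm{ln}(1-e^{-b^2-t^2}) = -\sum_{k=0}^\infty \frac{1}{k+1}e^{-(k+1)(b^2+t^2)}$, which is legitimate since $b>0$ forces $e^{-b^2-t^2}<1$ on the whole integration range, and then interchange sum and integral (justified by monotone convergence, all terms being negative). This yields
\beq \label{planeq2}
\frac{1}{\pi}\,\il_0^\infty \frac{b}{b^2+t^2}\,\mathrm{ln}(1-e^{-b^2-t^2})\,dt = -\frac{1}{\pi}\,\sum_{k=0}^\infty \frac{1}{k+1}\,\il_0^\infty \frac{b\,e^{-(k+1)(b^2+t^2)}}{b^2+t^2}\,dt.
\eq
Each inner integral is handled by the second representation of the complementary error function in \eqref{e52}, namely $\mathrm{erfc}(z) = \frac{2}{\pi}e^{-z^2}\int_0^\infty \frac{e^{-z^2 t^2}}{1+t^2}\,dt$: after the scaling $t \mapsto bt$ inside the integral one recognizes $\int_0^\infty \frac{b\,e^{-(k+1)(b^2+t^2)}}{b^2+t^2}\,dt = \frac{\pi}{2}\,\mathrm{erfc}(b\sqrt{k+1})$. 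Substituting back into \eqref{planeq2} gives $-\frac12\sum_{k=0}^\infty \frac{1}{k+1}\mathrm{erfc}(b\sqrt{k+1})$, which after reindexing $n=k+1$ and writing $\mathrm{erfc}$ as a Gaussian tail is precisely $-F(b\sqrt2)$ with $F$ as in \eqref{y81}. This reduces everything to the already-quoted identity \cite[(3.3) and (3.12)]{jllerch}, so no new special-function work is required.

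Finally I would set $b = d(s)$, so that $b\sqrt2 = d(s)\sqrt2 \in (0,2\sqrt\pi)$ for $s$ large (since $d(s)\to 0$ when $\alpha>1/2$), which is exactly the range in which the closed form for $F$ in \eqref{y81} is valid; combining with \eqref{planeq1} yields \eqref{y84}. The main obstacle is really just bookkeeping: one must check that the relative error $1+O(s^{\alpha-1})$ carried through \eqref{y78} genuinely multiplies the \emph{finite, nonzero} quantity $-F(d(s)\sqrt2)$ — which it does, since for $\alpha\in(1/2,1)$ one has $-F(d(s)\sqrt2) = (\alpha-1/2)\mathrm{ln}\,s + O(1)\to\infty$, so the stated multiplicative error is consistent — and that the interchange of limit, sum and integral in \eqref{planeq2} is uniform enough near $t=0$ (harmless, as $b>0$ keeps the integrand bounded there). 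Everything else is a direct appeal to results already in place.
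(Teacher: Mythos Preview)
Your proposal is correct and follows essentially the same route as the paper: quote the integral representation \eqref{y78}, identify it via the series expansion and the ${\rm erfc}$ identity \eqref{e52} as $-F(d(s)\sqrt{2})$ (this is precisely \eqref{y80}), and conclude. One cosmetic slip: in your final consistency check you write $-F(d(s)\sqrt{2}) = (\alpha-1/2)\ln s + O(1)\to\infty$, but the sign is reversed; by \eqref{y82} one has $-F(d(s)\sqrt{2}) = -(\alpha-1/2)\ln s + O(1)\to -\infty$, which is still nonzero and so your point about the multiplicative error being well-defined stands.
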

\begin{proposition} \label{emptyprop}
For $\alpha\in (0,1/2)$, and for all $b<b_0$,
\begin{equation}
{\rm ln}[\mathbb{P}(Q=0)] = O(\exp({-}b^2\,s^{1-2\alpha})).
 \end{equation}
 For $\alpha=1/2$, ${\rm ln}[\mathbb{P}(Q=0)]$ equals $-F(b_0\,\sqrt{2})$ with a relative error $O(1/\sqrt{s})$. For $\alpha\in (1/2,1)$ and $0<b_0<\sqrt{2\pi}$, ${\rm ln}[\mathbb{P}(Q=0)]$ has leading order behavior as in \eqref{y82}.
\end{proposition}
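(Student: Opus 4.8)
All the machinery needed is in fact already assembled in \S\ref{sec5}, so the plan is to obtain Proposition \ref{emptyprop} by reading off its three regimes from the symmetrized integral representation \eqref{y77} and from the expansion \eqref{y81} of $F$. I would start from \eqref{y77}, which gives ${\rm ln}[\mathbb{P}(Q=0)]$ with exponentially small error, obtained from Pollaczek's formula \eqref{e6} through the partial integration in \eqref{e100} and the saddle-point deformation $z=z(v)$ of \S\ref{spSec}.

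For $\alpha\in(0,1/2)$ the exponent satisfies $2\alpha-1<0$, so by \eqref{e24}--\eqref{e27} and \eqref{y50} one has $B=\exp(s\,g(z_{\rm sp}))=O(\exp(-b^2 s^{1-2\alpha}))$ for every $b\in(0,b_0)$. Since $B$ stays bounded away from $1$ and $e^{-\frac12 s\eta v^2}\le1$ on the integration interval, $|{\rm ln}(1-B\,e^{-\frac12 s\eta v^2})|\le C\,B$ there, and the whole integral in \eqref{y77} inherits the bound $O(\exp(-b^2 s^{1-2\alpha}))$; no saddle-point refinement is needed in this case.

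For $\alpha\in[1/2,1)$ the quantity $d^2(s)=b_0^2/s^{2\alpha-1}$ is bounded, and I would substitute $v=t\sqrt{2/(s\eta)}$ and mimic the treatment of $\mu_Q$ in \S\ref{subsec3.1}: truncate the integration range to $[0,s^{1/4}]$, apply the expansions \eqref{e24}--\eqref{e27}, \eqref{y45}--\eqref{y49} and \eqref{y79}, restore the range, and use the identity \eqref{y80} to collapse \eqref{y77} to $-F(d(s)\sqrt2)\,(1+O(s^{\alpha-1}))$, which is Theorem \ref{emptythm}. When $\alpha=1/2$, $d(s)=b_0$ is independent of $s$, so the leading term is the constant $-F(b_0\sqrt2)$ with relative error $O(s^{-1/2})$, and it equals the known value ${\rm ln}[\mathbb{P}(M=0)]$ via the series \eqref{y83}. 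When $\alpha\in(1/2,1)$, $d(s)=b_0\,s^{1/2-\alpha}\to0$, so (using $b_0<\sqrt{2\pi}$ to make $d(s)\sqrt2<2\sqrt\pi$ for all $s$) the series \eqref{y81} applies; its leading behavior $F(\beta)=-{\rm ln}\,\beta-\tfrac12{\rm ln}\,2+O(\beta)$ at $\beta=d(s)\sqrt2$ yields $-F(d(s)\sqrt2)=-(\alpha-\tfrac12)\,{\rm ln}\,s+{\rm ln}(2b_0)+O(s^{1/2-\alpha})$, i.e.\ \eqref{y82}.

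The hard part will be the error propagation through the logarithm in the range $\alpha>1/2$, where $B\to1$ and $d(s)\to0$ so that $1-B\,e^{-\frac12 s\eta v^2}$ is not bounded away from $0$ near $v=0$: one must check that the multiplicative errors $1+O(s^{-\alpha})+O(t^2/s)$ carried by $\tfrac12 s\eta(z_{\rm sp}-1)^2+t^2-2c_2(z_{\rm sp}-1)t^2+O(t^4/s)$ and by $B\,e^{-t^2}$ relative to $e^{-d^2(s)-t^2}$ do not, after taking logarithms, overwhelm the genuine ${\rm ln}\,s$ term. This is exactly where the elementary bound \eqref{y55a} and the grouping of factors as $d^2(s)/(d^2(s)+t^2)$ enter, in analogy with \eqref{y53} for $\mu_Q$. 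Once that uniform control is in place the three assertions follow immediately.
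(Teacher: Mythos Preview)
Your proposal is correct and follows essentially the same route as the paper: the three regimes are read off from the symmetrized integral \eqref{y77}, with the $\alpha<1/2$ case handled by the crude bound $|\ln(1-Be^{-\frac12 s\eta v^2})|=O(B)$ and the $\alpha\ge 1/2$ cases handled by the substitution $v=t\sqrt{2/(s\eta)}$, the collapse to $-F(d(s)\sqrt2)$ via \eqref{y80}, and then the expansion \eqref{y81}. Your identification of the delicate step---controlling the error inside the logarithm when $B\to1$ and $d(s)\to0$---is exactly the point where the passage to the last line of \eqref{y78} needs the inequality \eqref{y55a} in the guise $d^2(s)/(d^2(s)+t^2)$, just as in \eqref{y53}; once that is in hand, the rest is bookkeeping.
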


As in \S\ref{subsec3.3} for the mean congestion level case
with $\alpha=1/2$, it is possible to give a correction term which involves now the integrals in \eqref{y80} and \eqref{e51}.
%

\section{Numerical examples}\label{numm}
\subsection{Accuracy of the approximations}
In this subsection we present a numerical example that serves to illustrate the accuracy of the derived heavy-traffic approximations. Consider the Poisson case
\beq
X(z)=e^{z-1},\quad \mu_X = \sigma_X^2 = 1.
\eq
We fix $\mu_X$ and vary $n$ with the value of $s$, according to
\beq
\vart = \frac{n}{s} = 1-\frac{\gamma}{s^\alpha}
\eq
for some $\gamma>0$ and $\alpha\geq 1/2$. To calculate the exact value of the mean congestion level we use the expression, see \cite{boudreau},
\eqan{\label{x73}
\mu_Q=\frac{\sigma_A^2}{2(s-\mu_A)}-\frac{s-1+\mu_A}{2}+\sum_{k=1}^{s-1}\frac{1}{1-z_k}.
}
Here $z_1,\ldots,z_{s-1}$ are the zeros of $z^s-A(z)$ in $|z|<1$. We apply the method of successive substitution described in \cite{ref11} to obtain accurate numerical approximations for $z_1,...,z_{s-1}$ and consequently $\mu_Q$. 

From Theorem \ref{mainthm}, we find that the leading order behavior of $\mu_Q$ is given by
\beq \label{x18}
\frac{\sqrt{2s}}{\pi}\,G_0\Bigl(\frac{\gamma}{\sqrt{2}\,s^{\alpha-\frac{1}{2}}}\Bigr).
\eq
In order to find the correction terms, we proceed by setting $\alpha = 1/2$. Deriving constants $C_1,C_2,C_3,$ and $C_4$ for our setting and substituting these into \eqref{e71},  we get for $\mu_Q$, with an absolute error of $O(s^{-1/2})$, the approximation
\beq\label{x19}
\frac{\sqrt{2\,s}}{\pi}\Bigl(\Bigl(1-\frac{\gamma}{3\,\sqrt{s}}\Bigr)\,G_0(b_0)-\frac{\gamma^3}{3\,\sqrt{s}}\,(\,G_3(b_0)+G_4(b_0))\Bigr),
\eq
which by \eqref{e49} and \eqref{e74} reduces to
\beq\label{x20}
\frac{\sqrt{2\,s}}{\pi}\,G_0(b_0)-\frac{\sqrt{2}\,\gamma}{3\,\pi}\,G_1(b_0).
\eq
\begin{table}
\parbox{.47\linewidth}{
\centering
\footnotesize
\begin{tabular}{rrrrr}
$s$ & $\rho$ & $\mu_Q$ & \eqref{x18} & \eqref{x20}\\
\hline
 10 & 0.683 & 0.244 & 0.399 & 0.247 \\
 20 & 0.776 & 0.410 & 0.565 & 0.412 \\
 50 & 0.858 & 0.739 & 0.893 & 0.741 \\
 100 & 0.900 & 1.110 & 1.263 & 1.111 \\
 200 & 0.929 & 1.633 & 1.787 & 1.634 \\
 500 & 0.955 & 2.672 & 2.825 & 2.673 \\
 1000 & 0.968 & 3.843 & 3.996 & 3.843\label{tab:poisson1}
\end{tabular}
\caption{Numerical results for $\gamma = 1$.}}
\hfill
\parbox{.48\linewidth}{
\centering
\footnotesize
\begin{tabular}{rrrrr}
$s$ & $\rho$ & $\mu_Q$ & \eqref{x18} & \eqref{x20}\\
\hline
 10 & 0.968 & 13.707 & 14.046 &13.732\\
 20 & 0.977 & 19.533 & 19.865 &19.551\\
 50 & 0.985 & 31.084 & 31.409 &31.095\\
 100 & 0.990 & 44.097 & 44.419 &44.106\\
 200 & 0.992 & 62.499 & 62.819 &62.505\\
 500 & 0.995 & 99.008 & 99.325 &99.011\\
 1000 & 0.996 & 140.152 & 140.468 &140.154\label{tab:poisson2}
\end{tabular}
\caption{Numerical results for $\gamma = 0.1$.}}
\end{table}
\begin{table}
\centering
\footnotesize
\begin{tabular}{rrr|rr|rr}
 & \multicolumn{2}{c}{$\alpha=0.6$} & \multicolumn{2}{|c|}{$\alpha=0.75$} & \multicolumn{2}{c}{$\alpha=0.9$}\\
$s$ &  $\mu_Q$ & \eqref{x18} &  $\mu_Q$ & \eqref{x18} &  $\mu_Q$ & \eqref{x18}\\
\hline
 10 & 17.781 & 18.125 & 25.970 & 26.318 & 37.553 & 37.905 \\
 20 & 27.309 & 27.647 & 44.391 & 44.734 & 71.195 & 71.541 \\
 50 & 47.948 & 48.281 & 89.623 & 89.961 & 164.637 & 164.978 \\
 100 & 73.245 & 73.574 & 152.031 & 152.367 & 309.353 & 309.692 \\
 200 & 111.752 & 112.079 & 257.435 & 257.769 & 580.170 & 580.507 \\
 500 & 195.082 & 195.409 & 515.443 & 515.776 & 1329.581 & 1329.917 \\
 1000 & 297.122 & 297.448 & 870.524 & 870.857 & 2487.227 & 2487.562\label{tab:poisson3}
\end{tabular}
\caption{Numerical results for $\gamma=0.1$ and several values of $\alpha$.}
\end{table}
\noindent Numerical results for $\alpha=1/2$ and various values of $s$ are given in Table 1 and 2, for $ \gamma = 1$ and  $\gamma = 0.1$, respectively.
We note that for small $s$ the leading order approximation is still off by a significant amount, while the refinement only shows an error in the second decimal for $\gamma = 0.1$. This seems to justify the use of the correction term. 
In Table 3 we compare the approximation \eqref{x18} against the exact value of $\mu_Q$ for three values of $\alpha\geq 1/2$ to assess the influence of $\alpha$. Clearly, the leading order approximation is relatively accurate for all three scenarios. As expected, the mean congestion increases along with $\alpha$, since utilization approaches 1 more rapidly in this case. 
\subsection{Connection to other queueing models}\label{subsec62}
As argued in the introduction, we believe that the heavy-traffic behavior for our discrete model will up to leading order be universal for a wide range of other models (when subjected to the same heavy traffic regime \eqref{bb}). We shall now substantiate this for many-server systems, for which under \eqref{bb}, it turns out that the mean congestion is $O(s^\alpha)$.  We compare the mean congestion level in our discrete queue with that in the multi-server systems $M/M/s$, $M/D/s$ and Gamma/Gamma/$s$, all with unit mean service time and occupation rate $1-\gamma/s^\alpha$. 

\begin{figure}
\centering
\begin{minipage}{.49\textwidth}
  \centering
  \includegraphics[width=.9\linewidth]{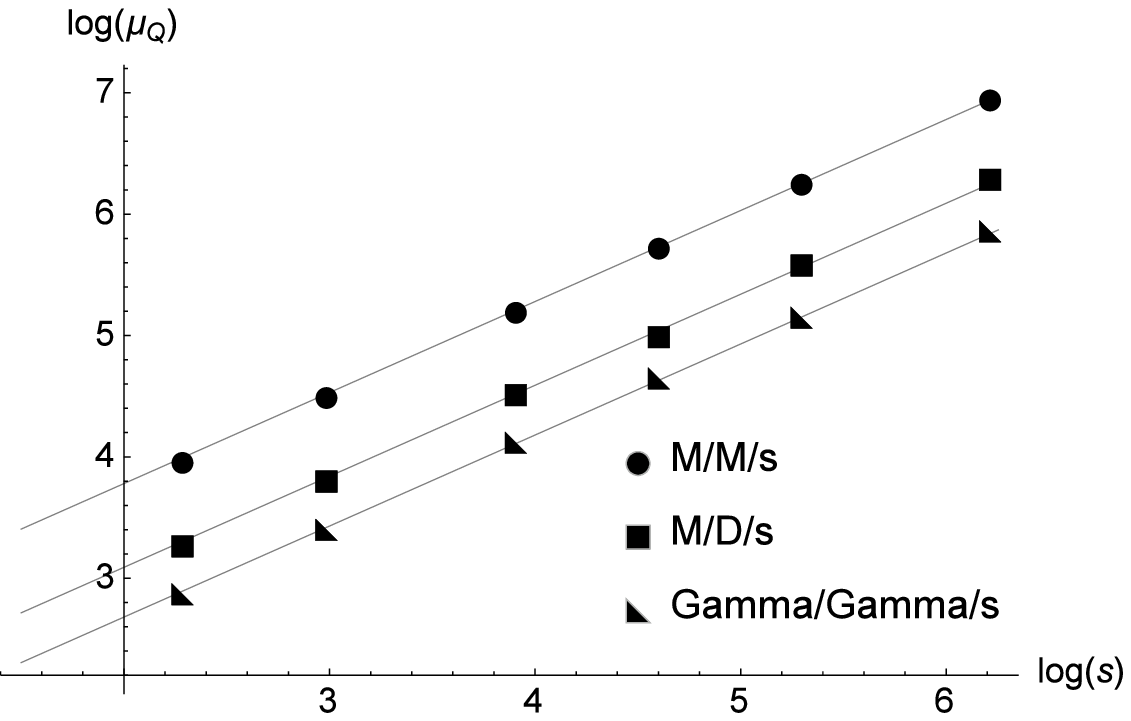}
  \captionof{figure}{$\mu_Q$ plotted against $s$ on log scale for 3 queues for $\alpha=0.75$.}
  \label{fig1}
\end{minipage}\hfill
\begin{minipage}{.49
\textwidth}
  \centering
  \includegraphics[width=.9\linewidth]{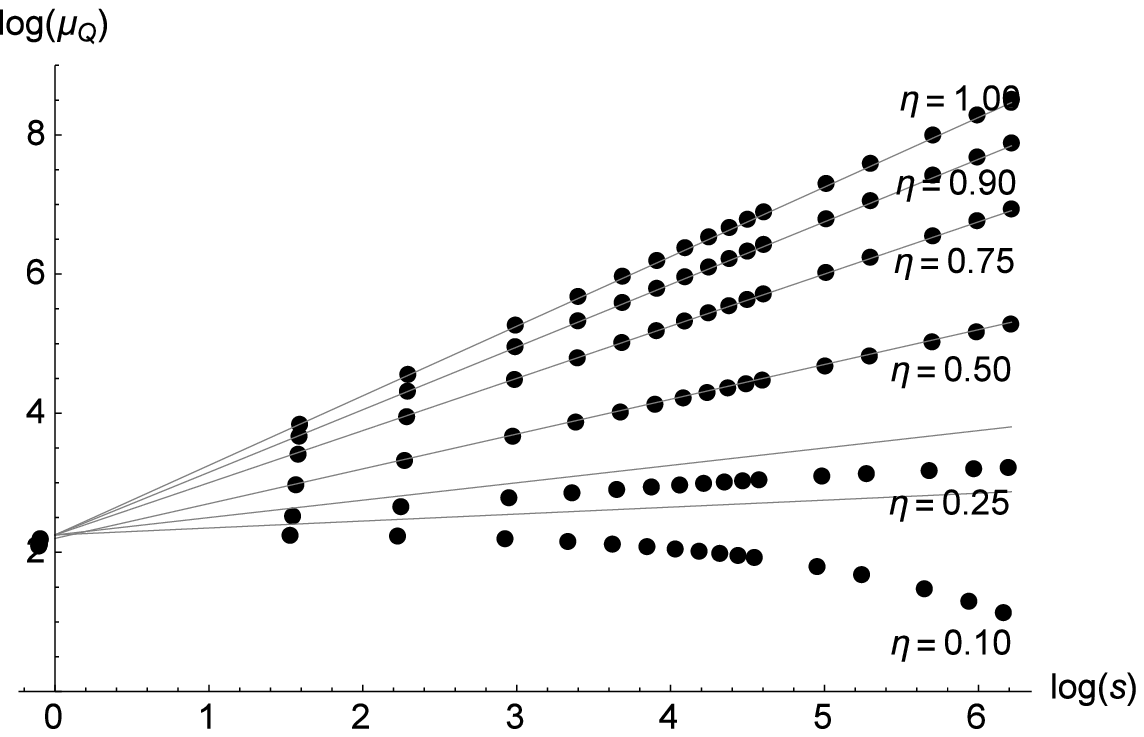}
  \captionof{figure}{$\mu_Q$ of $M/M/s$ plotted against $s$ on log scale for different values of $\alpha$.}
  \label{fig2}
\end{minipage}
\end{figure}
Figure \ref{fig1} shows on logarithmic scale the mean congestion levels for $\gamma=0.1$ and  $\alpha=0.75$  under the specified scaling for three systems. We also display three lines with slope 0.75 for comparison, which confirms that mean congestion levels are of the order $s^\alpha$, also in these multi-server systems. Formally establishing this heavy-traffic behavior for these multi-server system is an important open problem and requires other mathematical approaches than the ones taken in this paper (see the introduction for more details). 

Figure \ref{fig2} shows the mean queue length in the $M/M/s$ system for several values of $\alpha$, again on logarithmic scale, together with lines with slope $\alpha$. For $\alpha\geq 1/2$, we see the same $O(s^\alpha)$ behavior, similar as for $\mu_Q$ in our discrete model. For $\alpha<1/2$ the mean queue length decreases, again in agreement with our results for $\mu_Q$. We note that this qualitative behavior of the $M/M/s$ system was also observed by \cite[Thm 4.1]{maman}, by proving that the mean waiting time in the $M/M/s$ queue under \eqref{bb} is of the order $1/s^{1-\alpha}$, which by Little's law implies that the mean queue length is of the order $s^\alpha$.

\appendix

\section{Proof of Pollaczek's formula in the discrete setting} \label{app}

In the setting of \S\ref{sec1}, we shall show that for any $\eps>0$ with $1+\eps<r_0$,
\beq \label{e111}
Q(w)=\exp\Bigl(\frac{1}{2\pi i}\,\il_{|z|=1+\eps}\,{\rm ln}\Bigl(\frac{w-z}{1-z}\Bigr)\,\frac{(z^s-A(z))'}{z^s-A(z)}\,dz\Bigr)
\eq
holds when $|w|<1+\eps$. We shall establish (\ref{e111}) for any $w\in(1,1+\eps)$, and then the full result follows from analyticity of $Q(w)$ and of
\beq \label{e112}
{\rm ln}\Bigl(\frac{w-z}{1-z}\Bigr)={\rm ln}\Bigl(\frac{1-w/z}{1-1/z}\Bigr)={-}\,\sum_{k=1}^{\infty}\,\frac1k\,\Bigl(\Bigl(\frac{w}{z}\Bigr)^k-\Bigl(\frac1z\Bigr)^k\Bigr)
\eq
in $w$, $|w|<1+\eps$ for any $z$ with $|z|=1+\eps$.

Our starting point is the formula, see \cite{boudreau},
\beq \label{e113}
Q(w)=\frac{(s-\mu_A)(w-1)}{w^s-A(w)}\,\prod_{k=1}^{s-1}\,\frac{w-z_k}{1-z_k}
\eq
that holds for all $w$, $|w|<r_0$, in which $z_1,\ldots,z_{s-1}$ are the $s-1$ zeros of $z^s-A(z)$ in $|z|<1$. Fix $w\in(1,1+\eps)$. Then ${\rm ln}\,[(w-z)/(1-z)]$ is analytic in $z\in\dC\backslash [1,w]$. It follows that
\begin{align}
 I_C &= \frac{1}{2\pi i}\,\il_{|z|=1+\eps}\,{\rm ln}\Bigl(\frac{w-z}{1-z}\Bigr)\,\frac{(z^s-A(z))'}{z^s-A(z)}\,dz \nonumber \\
&=~\sum_{k=1}^{s-1}\,{\rm ln}\Bigl(\frac{w-z_k}{1-z_k}\Bigr)+\frac{1}{2\pi i}\,\il_C\,{\rm ln}\Bigl(\frac{w-z}{1-z}\Bigr)\,\frac{(z^s-A(z))'}{z^s-A(z)}\,dz ,
\label{e114}
\end{align}
where $C$ is a contour encircling $[1,w]$ in the positive sense with none of the $z_k$'s in its interior. We let $\delta\in(0,\frac{w-1}{2})$ and we take $C$ the union of two line segments, from $1+\delta-i0$ to $w-\delta-i0$ and from $w-\delta+i0$ to $1+\delta-i0$, and two circles, of radius $\delta$ and encircling 1 and $w$ in positive sense. 
A careful administration of the various contributions to the integral $I_C$ in \eqref{e114}, taking account of the branch cut $[1,w]$, yields
\begin{equation}\label{e115}
I_C = {\rm ln }\left(\frac{(s-\mu_A)(w-1)}{w^s-A(w)}\right) + O(\delta\,\rm{ln}\,\delta). 
\end{equation}
Using this in \eqref{e113} and letting $\delta \downarrow 0$, we get \eqref{e111} for $w\in(1,1+\varepsilon)$ and the proof is complete.\newline
\newline

\bibliography{Bibliography}

\begin{thebibliography}{10}

\bibitem{awp}
J.~Abate and W.~Whitt.
\newblock Calculation of the {GI/G/1} waiting-time distribution and its
  cumulants from {P}ollaczek's formulas.
\newblock {\em Archiv f\"{u}r Elektronik und \"{U}bertragungstechnik},
  47:311--321, 1993.

\bibitem{rouche}
I.J.B.F. Adan, J.S.H. van Leeuwaarden, and E.M.M. Winands.
\newblock On the application of {R}ouch\'e's theorem in queueing theory.
\newblock {\em Oper. Res. Lett.}, 34(3):355--360, 2006.

\bibitem{Anick1982}
D.~Anick, D.~Mitra, and M.M. Sondhi.
\newblock Stochastic theory of a data-handling system with multiple sources.
\newblock {\em The Bell System Technical Journal}, 61(8), October 1982.

\bibitem{asmussen}
S.~Asmussen.
\newblock {\em Applied {P}robability and {}Queues (second edition)}.
\newblock Springer-Verlag, New York, 2003.

\bibitem{Bassamboo2010}
A.~Bassamboo, R.S. Randhawa, and A.~Zeevi.
\newblock Capacity sizing under parammeter uncertainty: Safety staffing
  principles revisited.
\newblock {\em Management Science}, 56(10):1668--1686, October 2010.

\bibitem{blanchetglynn}
J.~Blanchet and P.~Glynn.
\newblock Complete corrected diffusion approximations for the maximum of a
  random walk.
\newblock {\em Ann. Appl. Probab.}, 16(2):951--983, 2006.

\bibitem{Borst2004}
S.C. Borst, A.~Mandelbaum, and M.I. Reiman.
\newblock Dimensioning large call centers.
\newblock {\em Operations Research}, 52(1):17--34, January-February 2004.

\bibitem{boudreau}
P.E. Boudreau, J.S. Griffin, Jr., and M.~Kac.
\newblock An elementary queueing problem.
\newblock {\em Amer. Math. Monthly}, 69(8):713--724, 1962.

\bibitem{Bruneel1993}
H.~Bruneel and B.G. Kim.
\newblock {\em Discrete-time {M}odels for {C}ommunication {S}ystems {I}ncluding
  ATM}.
\newblock Kluwer Academic Publishers, Boston, 1993.

\bibitem{changperes}
J.T. Chang and Y.~Peres.
\newblock Ladder heights, {G}aussian random walks and the {R}iemann zeta
  function.
\newblock {\em Ann. Probab.}, 25(2):787--802, 1997.

\bibitem{cohen}
J.W. Cohen.
\newblock {\em The {S}ingle {S}erver {Q}ueue}, volume~8 of {\em North-Holland
  Series in Applied Mathematics and Mechanics}.
\newblock North-Holland Publishing Co., Amsterdam, second edition, 1982.

\bibitem{Dai2014}
J.G. Dai and P.~Shi.
\newblock A two-time-scale approach to time-varying queues for hospital
  inpatient flow management.
\newblock 2014.

\bibitem{debruijn}
N.G. de~Bruijn.
\newblock {\em Asymptotic {M}ethods in {A}nalysis}.
\newblock Dover Publications Inc., New York, third edition, 1981.

\bibitem{flajolet}
P.~Flajolet and R.~Sedgewick.
\newblock {\em Analytic Combinatorics}.
\newblock Cambridge University Press, New York, NY, USA, 1 edition, 2009.

\bibitem{Green2008}
L.V. Green and S.~Savin.
\newblock Reducing delays for medical appointments: a queueing approach.
\newblock {\em Operations Research}, 56(6):1526--1538, 2008.

\bibitem{halfinwhitt}
S.~Halfin and W.~Whitt.
\newblock Heavy-traffic limits for queues with many exponential servers.
\newblock {\em Oper.\ Res.}, 29:567--588, 1981.

\bibitem{ref11}
A.J.E.M. Janssen and J.S.H. van Leeuwaarden.
\newblock Analytic computation schemes for the discrete-time bulk service
  queue.
\newblock {\em Queueing Syst.}, 50:141--163, 2005.

\bibitem{relaxation}
A.J.E.M. Janssen and J.S.H. van Leeuwaarden.
\newblock Relaxation time for the discrete {$D/G/1$} queue.
\newblock {\em Queueing Syst.}, 50(1):53--80, 2005.

\bibitem{jllerch}
A.J.E.M. Janssen and J.S.H. van Leeuwaarden.
\newblock On {L}erch's transcendent and the {G}aussian random walk.
\newblock {\em Ann.\ Appl.\ Probab.}, 17:421--439, 2006.

\bibitem{cumulants}
A.J.E.M. Janssen and J.S.H. van Leeuwaarden.
\newblock Cumulants of the maximum of the {G}aussian random walk.
\newblock {\em Stochastic Process. Appl.}, 117(12):1928--1959, 2007.

\bibitem{jelenkovic}
P.~Jelenkovi\'{c}, A.~Mandelbaum, and P.~Mom\v{c}ilovic.
\newblock Heavy traffic limits for queues with many deterministic servers.
\newblock {\em Queueing~Syst.}, 47:53--69, 2004.

\bibitem{maman}
S.~Maman.
\newblock Uncertainty in the demand for service: The case of call centers and
  emergency departments.
\newblock {\em M.Sc. thesis, Technion {–}Israel Institute of Technology,
  Haifa, Israel.}, 2009.

\bibitem{Newell1960}
G.F. Newell.
\newblock Queues for a fixed-cycle traffic light.
\newblock {\em Ann. Math. Statist.}, 31:589--597, 1960.

\bibitem{ref5}
F.W.J. Olver, D.W. Lozier, R.F. Boisvert, and C.W. Clark.
\newblock {\em NIST Handbook of Mathematical Functions}.
\newblock Cambridge University Press, Cambridge, 2010.

\bibitem{siegmund}
D.~Siegmund.
\newblock {\em Sequential Analysis}.
\newblock Springer Series in Statistics. Springer-Verlag, New York, 1985.

\bibitem{nd1}
K.~Sigman and W.~Whitt.
\newblock Heavy-traffic limits for nearly deterministic queues.
\newblock {\em J. Appl. Probab.}, 48(3):657--678, 2011.

\bibitem{nd2}
K.~Sigman and W.~Whitt.
\newblock Heavy-traffic limits for nearly deterministic queues: stationary
  distributions.
\newblock {\em Queueing Systems Theory Appl.}, 69(2):145--173, 2011.

\bibitem{johanthesis}
J.S.H. van Leeuwaarden.
\newblock {\em Queueing {M}odels for {C}able {A}ccess {N}etworks}.
\newblock PhD thesis, Eindhoven University of Technology, 2005.

\bibitem{vanLeeuwaarden2006}
J.S.H. van Leeuwaarden.
\newblock Delay analysis for the fixed-cycle traffic light queue.
\newblock {\em Transportation Science}, 40:189--199, 2006.

\bibitem{Zacharias2014}
C.~Zacharias and M.~Armony.
\newblock Joint panel sizing and appointment scheduling in outpatient care.
\newblock 2014.

\end{thebibliography}

\end{document}